\newtheorem{theorem}{Theorem}[section]
\newtheorem{lemma}[theorem]{Lemma}
\newtheorem{proposition}[theorem]{Proposition}
\newtheorem{definition}{Definition}[section]
\theoremstyle{remark}
\newtheorem{remark}[theorem]{Remark}
\theoremstyle{definition}
\numberwithin{equation}{section}
\def\RR{\mbox{{\sl I}}\!\mbox{{\sl R}}}
\def\ZZ{\mbox{{\sl Z}}\!\!\mbox{{\sl Z}}}
\newcommand{\pp}{{\textbf{p}}}
\begin{document}
\title[An existence result for the anisotropic $\pp(x)$-Laplacian]{Existence of weak solutions for the anisotropic $\pp(x)$-Laplacian via degree theory}

%

	\author{Pablo Ochoa}

\address{Pablo Ochoa. Universidad Nacional de Cuyo. CONICET. Universidad J. A. Maza\\Parque Gral. San Mart\'in 5500\\
Mendoza, Argentina.}
\email{pablo.ochoa@ingenieria.uncuyo.edu.ar}
\author{Federico Ramos Valverde}

\address{Federico Ramos Valverde.
Universidad Nacional de San Juan-CONICET \\Mitre 396 Este  J5402CWH \\ San Juan,
Argentina.}
\email{federicorvalverde.ffha@gmail.com}
\author{Anal\'ia Silva}

\address{Silva Anal\'ia. Departamento de Matem\'atica, FCFMyN, Universidad Nacional de San
Luis and Instituto de Matem\'atica Aplicada San
Luis (IMASL), CONICET. Av.Ejercito de los Andes 950, San Luis (5700), Argentina.
}
\email{{acsilva@unsl.edu.ar}\hfill\break\indent {\it Web page:} {\tt analiasilva.weebly.com}}

\subjclass[2020]{35A16, 35D30, 35J60,47H11}

\keywords{nonlinear elliptic equations; p(x)-Laplacian; anisotropic; degree theory}

	\maketitle
	
	\begin{abstract}
		In this paper, we consider the following Dirichlet boundary value problem involving the anisotropic $\pp(x)$-Laplacian 
\begin{equation}\label{main problemsp}
\begin{cases}
-\Delta_{\pp(x)}u + |u|^{p_M(x)-2}u = f(x, u, D u)\quad \text{in }\Omega\\u= 0 \quad \text{on }\partial \Omega
\end{cases}
\end{equation}where $\pp(x)= (p_1(x), ..., p_n(x))$, with $p_i(x)> 1$ in $\overline{\Omega}$, and $p_M(x):=\max\{p_1(x),..., p_n(x)\}$. Using the topological degree constructed by Berkovits, we prove, under appropriate assumptions on the data, the
existence of weak solutions for the given problem. An important contribution is that we are considering the degenerate and the singular cases in the discussion. Finally,  according to the compact embedding for anisotropic Sobolev spaces, we point out that \eqref{main problemsp} may be critical in some region of $\Omega$. 
\end{abstract}

	\section{Introduction}
In this paper, we consider weak  solutions of non-homogeneous $\pp(x)$-Laplace equations of the form
\begin{equation}\label{main equation 1}
- \Delta_{\pp(x)} \,u + |u|^{p_M(x)-2}u= f(x,u, Du) \ \ \ \mbox{in} \ \Omega,
\end{equation}where $\Omega$ be a bounded domain of $\RR^n$ ($n \geq 3$) with smooth boundary,  $\pp: \overline{\Omega} \to \RR^n$ is a  vector field 
$$\pp(x)  = (p_1(x), p_2 (x), \ldots, p_n(x)),\quad p_i \in C(\overline{\Omega}), \,p_i(x)> 1 \text{ in }\overline{\Omega}, \,\,i = 1,2, \ldots, n,$$the variable exponent $p_M$ is
$$p_M(x):=\max \{p_1(x),..., p_n(x)\} \quad x \in \Omega,$$
and $- \Delta_{\pp(x)}$ is the $\pp(x)$-Laplace operator defined as
\begin{equation}\label{operator}
     - \Delta_{\pp(x)} \, u := - \displaystyle\sum_{i = 1}^{n}\partial_{x_i}(|\partial_{x_i} u |^{p_{i}(x) - 2} \partial_{x_i} u ).
\end{equation}

Recently, the study of partial differential equations with variable exponents has been motivated by the description of models in electrorheological and thermorheological fluids, image processing, or robotics (see for instance \cite{CLR} for image restoration and the book \cite{DHHR} for further applications and references).

Existence and multiplicity of weak solutions to problems involving the $\pp(x)$-Laplace operator have been considered in several works. In \cite{BPR},  variational methods have been applied  to non-homogeneous problems with a non linearity depending on lower order terms. Also, the critical point theory has been used in \cite{MM}  for more general operators of anisotropic type, and in  \cite{Ji} for eigenvalue problems with both Dirichlet and Neumann   boundary conditions. In these works, the Ambrosetti-Rabinowitz assumption on the right-hand side is the key to stablish  the mountain pass geometry of the  energy functional associated to the differential equation. More recently, anisotropic problems which can be critical in some region of the domain,   have been considered in \cite{OR}, and in \cite{Mh}. In this latter case, the right-hand side has low regularity (it belongs for instance to $L^1$). We also mention that the concentration compactness principle of Lions to treat critical problems has been extended to the anisotropic case in the reference \cite{CCL}. Finally, the relation between different notions of weak solutions and their regularity has been studied recently in \cite{OV} for non-homogeneous equations with the $\pp(x)$-Laplacian operator.

On the other hand, topological degree theory can be one of the most effective tools for demonstrating the existence of solutions in nonlinear equations, even without the Ambrosetti-Rabinowitz condition. As a measure of the algebraic number of solutions of the functional equation $F(x) = h$ for a fixed $h$, the degree has special properties such as existence, normalization, additivity, and homotopy invariance. The most powerful is that the value of the degree does not vary under appropriate perturbations (homotopies), which plays an important role in the study of differential and integral equations.

Brouwer \cite{Br} initiated the construction of a topological degree for continuous mappings on a bounded domain of $\RR^n$. Afterwards, Leray and Schauder \cite{LS} developed the degree theory for compact perturbations of the identity operator  in infinite-dimensional Banach spaces. Brouwer \cite{Bro} introduced a topological degree for nonlinear operators of monotone type in reflexive Banach spaces, where the Galerkin method is used to apply Brouwer's degree. Berkovits \cite{Ber} provided a new construction of the Browder degree, based on the Leray-Schauder degree. From this perspective, a recent  extension of the Leray-Schauder degree for operators of generalized monotone type was studied in \cite{Ber} and for possible unbounded operators in \cite{Kim}. One application of the Degree Theory to double phase problems with variable exponents can be seen in \cite{Ou} for degenerate equations.  We refer the reader to the book \cite{V} for a comprehensive introduction to the matter. 

 In this paper, we investigate the existence of weak solution for a Dirichlet boundary value problem involving the anisotropic $\pp(x)$-Laplacian of the following form 
\begin{equation}\label{main problemss}
\begin{cases}
-\Delta_{\pp(x)}u + |u|^{p_M(x)-2}u = f(x, u, D u)\quad \text{in }\Omega\\u= 0 \quad \text{on }\partial \Omega
\end{cases}
\end{equation}
where 
is anisotropic $\pp(x)$-Laplacian operator defined in \eqref{operator}, $\Omega$ is a smooth bounded domain in $\RR^n$ and $f$ is Carathéodory function. The main challenge when dealing with $-\Delta_{\pp\,(x)}$ is that this operator is highly non-homogeneous due to the influence of each component of the spatial variables and the exponents $p_i(x)$ in each partial derivative. Consequently, these facts will require adapting the usual techniques of topological degree theory to our framework. 

Under suitable nonstandard growth conditions for the function $f$ and using Berkovits' topological degree for a class of $(S_+)$-type operators and the theory of anisotropic Sobolev spaces, we establish the existence of a weak solution for the previous problem. 

We next state the main result of the paper. We will assume that  the function $f: \Omega \times \mathbb{R}\times \mathbb{R}^n \to \mathbb{R}$ satisfies
\begin{itemize}
\item[(f1)] $f$ is a Carath\'eodory function, that is, $x \to f(x, u, \xi)$ is measurable for all $u \in \mathbb{R}$ and all $\xi\in \mathbb{R}^n$, and $(u, \xi)\to f(x, u, \xi)$ is continuous for all $x \in \Omega$.
\item[(f2)] There exist $q \in \mathcal{C} (\overline{\Omega})$, $q(x)>1$, and $k \in L^{q'(x)}(\Omega)$ such that
$$|f(x, u, \xi)|\leq C\left(|k|+|u|^{q(x)-1} + \sum_{i=1}^n |\xi_i|^{r_i(x)}\right),$$where
$$1 < q(x)\leq q^+ < p_m:= \min\left\lbrace p_M^-, p_1^-,..., p_n^- \right\rbrace,$$and $r_i$ are non negative and measurable functions satisfying
$$1 < r_i(x)q'(x)\leq p_i(x)\quad \text{ and } \quad  r_i^+ < \dfrac{p_i^-}{p_m}( p_m-1), \quad \text{for all } i=1, ..., n.$$
\end{itemize}


The main result of the paper is the following:
\begin{theorem}\label{existence}Suppose that $p_i \in  \mathcal{C}(\overline{\Omega})$ and $ p^{-}_i> 1$ for all $i=1, ..., n$. Assume $(f1)$ and $(f2)$. Then the Dirichlet problem
\begin{equation}\label{main problems}
\begin{cases}
-\Delta_{\pp(x)}u + |u|^{p_M(x)-2}u = f(x, u, D u)\quad \text{in }\Omega\\u= 0 \quad \text{on }\partial \Omega
\end{cases}
\end{equation}has a weak solution $u\in W_0^{1, \pp(x)}(\Omega)$.
\end{theorem}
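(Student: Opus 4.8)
\medskip
\noindent\textbf{Proof strategy.}
The plan is to reformulate \eqref{main problems} as an operator equation in $X:=W_0^{1,\pp(x)}(\Omega)$ — a real separable reflexive Banach space — and then apply Berkovits' topological degree for operators of class $(S_+)$. Define the principal operator $A\colon X\to X^{*}$ by
\[
\langle Au,v\rangle=\sum_{i=1}^{n}\int_{\Omega}|\partial_{x_i}u|^{p_i(x)-2}\partial_{x_i}u\,\partial_{x_i}v\,dx+\int_{\Omega}|u|^{p_M(x)-2}u\,v\,dx,\qquad u,v\in X,
\]
and the lower-order (Nemytskii) operator $N_f\colon X\to X^{*}$ by $\langle N_f u,v\rangle=\int_{\Omega}f(x,u,Du)\,v\,dx$; then $u\in X$ is a weak solution of \eqref{main problems} exactly when $Au=N_f u$ in $X^{*}$. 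The first task is to establish the standard structural properties of $A$: from the elementary monotonicity and convexity inequalities for $\xi\mapsto|\xi|^{p-2}\xi$ applied to each partial derivative, together with the modular--norm relations on $X$, one shows that $A$ is bounded, continuous, strictly monotone, coercive and of class $(S_+)$. By the Minty--Browder theorem $A$ is then a homeomorphism of $X$ onto $X^{*}$; moreover $A$ is odd and $A(0)=0$. (Here the zeroth-order term $|u|^{p_M(x)-2}u$ is what makes the modular $\rho(u):=\sum_i\int_\Omega|\partial_{x_i}u|^{p_i(x)}dx+\int_\Omega|u|^{p_M(x)}dx$ control $\|u\|_X^{p_m}$ from below and makes $A$ surjective; recall $W_0^{1,\pp(x)}(\Omega)$ embeds continuously in $L^{p_M(x)}(\Omega)$, possibly criticalally in part of $\Omega$.)

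Next I would analyze $N_f$. By $(f1)$ the Nemytskii map is well defined, and from $(f2)$, the relation $r_i(x)q'(x)\le p_i(x)$, Hölder's inequality in variable-exponent Lebesgue spaces and the continuous embedding $X\hookrightarrow L^{q(x)}(\Omega)$, one checks that $N_f$ maps $X$ into $X^{*}$, is bounded and (strongly) continuous, hence demicontinuous; along the way one also gets that $\{f(\cdot,u_k,Du_k)\}$ stays bounded in $L^{q'(x)}(\Omega)$ whenever $\{u_k\}$ is bounded in $X$. Combining this with the \emph{compact} embedding $X\hookrightarrow\hookrightarrow L^{q(x)}(\Omega)$ — available because $q^{+}<p_m$ — yields
\[
|\langle N_f u_k,u_k-u\rangle|\le 2\,\|f(\cdot,u_k,Du_k)\|_{L^{q'(x)}}\,\|u_k-u\|_{L^{q(x)}}\longrightarrow 0\qquad\text{whenever }u_k\rightharpoonup u\text{ in }X.
\]
In particular $-N_f$ is quasimonotone, so for every $t\in[0,1]$ the operator $A-tN_f\colon X\to X^{*}$ is bounded, demicontinuous and of class $(S_+)$ (its $(S_+)$ property reduces to that of $A$ because the $N_f$-pairing above vanishes along weakly convergent sequences), and $H(t,\cdot):=A-tN_f$ is an admissible $(S_+)$-homotopy in the sense required by the degree.

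The heart of the argument — and the step I expect to be the main obstacle — is the a priori bound: there is $R>1$ such that every $u\in X$ with $Au=tN_f u$ for some $t\in[0,1]$ satisfies $\|u\|_X<R$. Testing with $u$ gives $\rho(u)=\langle Au,u\rangle=t\langle N_f u,u\rangle\le C\int_\Omega\big(|k|+|u|^{q(x)-1}+\sum_i|\partial_{x_i}u|^{r_i(x)}\big)|u|\,dx$. The contributions $\int_\Omega|k||u|$ and $\int_\Omega|u|^{q(x)}$ are bounded, via $X\hookrightarrow L^{q(x)}(\Omega)$ and the norm--modular relations, by $C(1+\|u\|_X^{q^{+}})$ with $q^{+}<p_m$. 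For the mixed gradient terms one uses the pointwise Young inequality with the conjugate exponents $p_i(x)/r_i(x)$ and $p_i(x)/(p_i(x)-r_i(x))$ (note $r_i^{+}<p_i^{-}$, so $p_i(x)/r_i(x)>1$):
\[
|\partial_{x_i}u|^{r_i(x)}|u|\le \varepsilon\,|\partial_{x_i}u|^{p_i(x)}+C_\varepsilon\,|u|^{\,p_i(x)/(p_i(x)-r_i(x))},
\]
the first term being absorbed into $\varepsilon\,\rho(u)$ on the left-hand side; the key point is that $r_i^{+}<\frac{p_i^{-}}{p_m}(p_m-1)$ is precisely what forces $\sup_{\Omega}\,p_i(x)/(p_i(x)-r_i(x))\le p_i^{-}/(p_i^{-}-r_i^{+})<p_m$, so that $\int_\Omega|u|^{p_i(x)/(p_i(x)-r_i(x))}$ is again controlled by $C(1+\|u\|_X^{\beta_i})$ with $\beta_i<p_m$. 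Collecting everything, $(1-\varepsilon)\,\rho(u)\le C(1+\|u\|_X^{\beta})$ with $\beta<p_m$, while $\rho(u)\ge\|u\|_X^{p_m}$ for $\|u\|_X\ge1$; hence $\|u\|_X$ must stay bounded. The delicate part is exactly this bookkeeping in the anisotropic variable-exponent setting: juggling the several modulars $\rho_{p_i}$, $\rho_{p_M}$, the exponent constraints on $q$ and the $r_i$, and the conversions between modulars and $\|\cdot\|_X$.

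Finally, fix such an $R$ and work on the open ball $B_R=B_R(0)\subset X$. By the a priori bound, $0\notin(A-tN_f)(\partial B_R)$ for all $t\in[0,1]$, so homotopy invariance of Berkovits' $(S_+)$-degree gives $d(A-N_f,B_R,0)=d(A,B_R,0)$. Since $A$ is a bounded demicontinuous odd operator of class $(S_+)$ with $0\notin A(\partial B_R)$, Borsuk's theorem for this degree shows that $d(A,B_R,0)$ is an odd integer, hence $d(A-N_f,B_R,0)\neq 0$. By the existence property of the degree there is $u\in B_R$ with $Au-N_f u=0$, i.e.\ a weak solution $u\in W_0^{1,\pp(x)}(\Omega)$ of \eqref{main problems}, which proves the theorem.
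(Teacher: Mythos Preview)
Your argument is correct and takes a genuinely different route from the paper. The paper does not work directly with the $(S_+)$-map $A-tN_f:X\to X^{*}$; instead it inverts the principal part (setting $T:=F^{-1}$, after checking continuity of $T$ by hand since $F$ is not uniformly monotone), rewrites the problem as the Hammerstein equation $v+S\circ T v=0$ in $X^{*}$, and applies the Kim--Hong extension of Berkovits' degree to the homotopy $H(t,v)=v+tS\circ Tv$, concluding via the \emph{normalization} $d(I,B_R,0)=1$ rather than Borsuk. Your approach is more direct: it avoids inverting $A$ and the $(S_+)_T$ machinery entirely, and computing the degree at $t=0$ via Borsuk's theorem for odd $(S_+)$-maps is a nice shortcut. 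For the a priori bound on the gradient term $\int_\Omega|\partial_{x_i}u|^{r_i(x)}|u|\,dx$, the paper uses H\"older with the pair $(q'(x),q(x))$ together with the hypothesis $r_i(x)q'(x)\le p_i(x)$ and Lemma~\ref{product}, arriving at the exponent $r_i^{+}/p_i^{-}+1/p_m<1$; your pointwise Young inequality with exponents $p_i(x)/r_i(x)$ and its conjugate, followed by absorption, is an alternative that makes the role of the constraint $r_i^{+}<\tfrac{p_i^{-}}{p_m}(p_m-1)$ very transparent (it is exactly what forces the residual exponent $p_i^{-}/(p_i^{-}-r_i^{+})$ below $p_m$). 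One small caution: the inequality $\rho(u)\ge\|u\|_X^{p_m}$ as stated is not literally true for all $\|u\|_X\ge1$ in the anisotropic setting, since some component norms may stay below $1$; what you actually get (and what suffices) is $\rho(u)\ge c\,\|u\|_X^{p_m}$ for $\|u\|_X$ large, by bounding from below through the largest component norm.
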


Observe that we can set $r_i(x)= q(x)-1$ for all $i$ and $x\in \Omega$, with $q$ satisfying $(f2)$, and all the assumptions for $r_i$ are verified. In particular, if $q$ is constant, we recover the results from \cite{Kim}. 

The paper is organized as follows. In Section \ref{Preliminary section}, we will provide the basic notation, definitions and properties related to the $\pp(x)$-Laplace operator and the functional framework. In Section \ref{degree theory}, we give an introduction to the degree theory of Berkovits and the generalization given in \cite{Kim}. Finally, in Section \ref{Proof of existence}, we stablish some preliminary lemmas and we prove the main result of the paper Theorem \ref{existence}. 

\section{Preliminaries}\label{Preliminary section}

\subsection*{Basic notation}

In what follows, given any real separable reflexive Banach space $X$, $X^{\ast}$ will denote its dual space with dual pairing $\langle \cdot, \cdot \rangle$. Given a nonempty set $\Omega$ of $X$,  $\overline{\Omega}$ and $\partial \Omega$ will denote the closure and the boundary of $\Omega$ in $X$, respectively. Strong (weak) convergence is represented by the symbol $\to$ ($\rightharpoonup$). The space of continuous functions over a closed set $A$ is denoted by $\mathcal{C}(A)$.  Also, we will denote by $C$ universal constants that may vary from line to line  in calculations.

\subsection*{Variable exponent spaces}

In this section we introduce basic definitions and preliminary results concerning  variable exponent spaces.

In what follows, all the scalar variable exponents $p=p(x):\Omega \to \mathbb{R}$ are in the space 
$$L^{\infty}_+(\Omega):=\left\lbrace p :\Omega \to [1, \infty): p \in L^\infty(\Omega)\right\rbrace.$$

%

For $p \in L^{\infty}_+(\Omega)$, we define
$$p^{+}:=\sup_{x \in \Omega}p(x) \quad \text{ and }\quad p^{-}:=\inf_{x \in \Omega}p(x).$$


\subsection{Scalar variable exponent spaces}

We define the Lebesgue variable exponent space as
$$L^{p(x)}(\Omega) := \left\{u: \Omega \to \RR: u \ \mbox{is measurable and} \ \displaystyle\int_{\Omega} |u(x)|^{p(x)} dx < \infty \right\},$$equipped with the Luxemburg norm
$$|u|_{p(x)}:=\inf\bigg\{\lambda>0: \int_\Omega \bigg\vert \dfrac{u(x)}{\lambda}\bigg\vert^{p(x)}\,dx \leq 1\bigg\}.$$
It turns out that  $L^{p(x)}(\Omega)$ is a Banach space, which is also separable. Assuming $p^- > 1$, the dual space of $L^{p(x)}(\Omega)$ is $L^{p'(x)}(\Omega)$, where
$$\displaystyle\frac{1}{p(x)} + \displaystyle\frac{1}{p'(x)} = 1, \quad \text{for all }x\in \Omega,$$and the Lebesgue space $L^{p(x)}(\Omega)$ is reflexive. For these and the next results, we refer the reader to  \cite{DHHR}.

\begin{theorem}[H\"{o}lder's inequality]
Assume that $p, p' \in L^{\infty}_+(\Omega)$. If $u \in L^{p(x)}(\Omega)$ and $v \in
L^{p'(x)}(\Omega)$, then
$$\Big\vert \int_\Omega u v \,dx\Big\vert   \leq \left(\frac{1}{p^{-}}+\frac{1}{(p')^{-}} \right)|u|_{p(x)}|v|_{p'(x)}.$$
\end{theorem}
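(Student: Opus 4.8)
The plan is to reduce the statement to the pointwise Young inequality after normalizing by the Luxemburg norms. First I would dispose of the degenerate cases: if $|u|_{p(x)} = 0$ then $u = 0$ almost everywhere and both sides vanish, and similarly if $|v|_{p'(x)} = 0$; so I may assume $\lambda := |u|_{p(x)} > 0$ and $\mu := |v|_{p'(x)} > 0$.

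The key preliminary fact I would record is the relation between the norm and the modular $\rho_p(w) := \int_\Omega |w(x)|^{p(x)}\,dx$. Since $p^+ < \infty$, the map $t \mapsto \rho_p(u/t)$ is continuous and strictly decreasing on $(0,\infty)$, so the infimum defining $|u|_{p(x)}$ is attained and
$$\int_\Omega \left|\frac{u(x)}{\lambda}\right|^{p(x)} dx \leq 1, \qquad \int_\Omega \left|\frac{v(x)}{\mu}\right|^{p'(x)} dx \leq 1.$$
Only the inequality $\leq 1$ is needed below, and it follows directly from the definition of the Luxemburg norm as an infimum.

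Next I would apply Young's inequality pointwise. For each fixed $x \in \Omega$ the exponents $p(x), p'(x) \in (1,\infty)$ are conjugate, so for the nonnegative numbers $|u(x)|/\lambda$ and $|v(x)|/\mu$ one has
$$\frac{|u(x)|}{\lambda}\cdot\frac{|v(x)|}{\mu} \leq \frac{1}{p(x)}\left(\frac{|u(x)|}{\lambda}\right)^{p(x)} + \frac{1}{p'(x)}\left(\frac{|v(x)|}{\mu}\right)^{p'(x)}.$$
Integrating over $\Omega$ and using $1/p(x) \leq 1/p^-$, $1/p'(x) \leq 1/(p')^-$ together with the two modular bounds gives
$$\frac{1}{\lambda\mu}\int_\Omega |u(x)\,v(x)|\,dx \leq \frac{1}{p^-}\int_\Omega \left|\frac{u}{\lambda}\right|^{p(x)} dx + \frac{1}{(p')^-}\int_\Omega \left|\frac{v}{\mu}\right|^{p'(x)} dx \leq \frac{1}{p^-} + \frac{1}{(p')^-}.$$
Multiplying through by $\lambda\mu = |u|_{p(x)}|v|_{p'(x)}$ and bounding $\left|\int_\Omega uv\,dx\right| \leq \int_\Omega |uv|\,dx$ yields the claimed inequality.

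The only genuinely delicate point is the passage from the infimum in the Luxemburg norm to the modular inequality $\rho_p(u/\lambda) \leq 1$; this is where the hypothesis $p \in L^\infty_+(\Omega)$ (so that $p^+ < \infty$) is used, guaranteeing continuity of $t \mapsto \rho_p(u/t)$ so that the infimum behaves well. Everything else is the pointwise convexity estimate and the monotone bounding of the factors $1/p(x)$ and $1/p'(x)$, which are routine. I would also remark that $\frac{1}{p^-}+\frac{1}{(p')^-} \geq 1$, so this is the expected variable-exponent analogue of the classical H\"older inequality, the extra factor accounting for the $x$-dependence of the conjugate pair.
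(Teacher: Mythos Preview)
Your proof is correct and is the standard argument: normalize by the Luxemburg norms, apply Young's inequality pointwise with the conjugate pair $(p(x),p'(x))$, bound $1/p(x)\le 1/p^{-}$ and $1/p'(x)\le 1/(p')^{-}$, and use that the modulars of the normalized functions are at most~$1$. The paper does not actually prove this theorem; it is quoted as a preliminary result with a reference to \cite{DHHR}, where exactly this argument appears, so there is nothing to compare.
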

The next proposition states the relation between norms and modulars. 
\begin{proposition}\label{properties modulo} Let $p\in L^{\infty}_+(\Omega)$ and let
$$\rho_p(u):=\int_\Omega |u|^{p(x)}\,dx, \quad u \in L^{p(x)}(\Omega),$$
be the convex modular associated to $p$. Then the following assertions hold:
\begin{itemize}
\item[(i)] $|u|_{p(x)} <1$ (resp. $=1, >1$) if and only if $\rho_p(u)< 1$ (resp. $=1, >1$);
\item[(ii)] $|u|_{p(x)} >1$ implies $|u|_{p(x)}^{p^{-}} \leq \rho_p(u) \leq |u|_{p(x)}^{p^{+}}$, and $|u|_{p(x)} <1$ implies $|u|_{p(x)}^{p^{+}} \leq \rho_p(u) \leq |u|_{p(x)}^{p^{-}}$;
\item[(iii)] $|u|_{p(x)}  \to 0$ if and only if $\rho_p(u)\to 0$, and $|u|_{p(x)} \to \infty$ if and only if $\rho_p(u)\to \infty$.
\end{itemize}
\end{proposition}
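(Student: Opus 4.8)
The plan is to reduce all three assertions to a single structural fact about the auxiliary function
$$\varphi_u(\lambda) := \rho_p\!\left(\frac{u}{\lambda}\right) = \int_\Omega \frac{|u(x)|^{p(x)}}{\lambda^{p(x)}}\,dx, \qquad \lambda > 0,$$
defined for a fixed $u \in L^{p(x)}(\Omega)$ with $u \neq 0$. First I would record the elementary properties of $\varphi_u$: it is finite for $\lambda$ large, strictly decreasing on the interval where it is finite (because $\lambda \mapsto \lambda^{-p(x)}$ is strictly decreasing for each $x$ in the set $\{u \neq 0\}$, which has positive measure), and continuous there. Continuity is where one must be slightly careful: it follows from the dominated convergence theorem using the bound $p \in L^\infty_+(\Omega)$, so that $p^+ < \infty$ provides an integrable majorant on compact $\lambda$-intervals. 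Since $\varphi_u(\lambda) \to \infty$ as $\lambda \to 0^+$ and $\varphi_u(\lambda) \to 0$ as $\lambda \to \infty$, the intermediate value theorem together with strict monotonicity yields a unique $\lambda^\ast > 0$ with $\varphi_u(\lambda^\ast) = 1$; by the definition of the Luxemburg norm this $\lambda^\ast$ is exactly $|u|_{p(x)}$. This gives the \emph{unit ball property}
$$\rho_p\!\left(\frac{u}{|u|_{p(x)}}\right) = 1,$$
which is the backbone of the argument. The case $u = 0$ is trivial, since then both sides of each equivalence vanish.

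Assertion (i) then falls out immediately from strict monotonicity of $\varphi_u$. Indeed, $\rho_p(u) = \varphi_u(1)$, and comparing $1$ with $\lambda^\ast = |u|_{p(x)}$: if $|u|_{p(x)} < 1$ then $\varphi_u(1) < \varphi_u(\lambda^\ast) = 1$, so $\rho_p(u) < 1$, and conversely $\rho_p(u) < 1 = \varphi_u(\lambda^\ast)$ forces $1 > \lambda^\ast$; the cases $=1$ and $>1$ are identical with the inequalities reversed or replaced by equalities.

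For assertion (ii) I would exploit the unit ball property quantitatively. Writing $\lambda = |u|_{p(x)}$ so that $\int_\Omega |u|^{p(x)} \lambda^{-p(x)}\,dx = 1$, I split according to whether $\lambda > 1$ or $\lambda < 1$. When $\lambda > 1$ one has $\lambda^{p^-} \leq \lambda^{p(x)} \leq \lambda^{p^+}$ pointwise, hence
$$\frac{\rho_p(u)}{\lambda^{p^+}} \leq \int_\Omega \frac{|u|^{p(x)}}{\lambda^{p(x)}}\,dx = 1 \leq \frac{\rho_p(u)}{\lambda^{p^-}},$$
which rearranges to $|u|_{p(x)}^{p^-} \leq \rho_p(u) \leq |u|_{p(x)}^{p^+}$. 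When $\lambda < 1$ the inequalities between the powers of $\lambda$ reverse, giving $|u|_{p(x)}^{p^+} \leq \rho_p(u) \leq |u|_{p(x)}^{p^-}$, exactly as stated.

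Finally, assertion (iii) is a squeezing consequence of (ii) applied along a sequence. If $|u_n|_{p(x)} \to 0$ then eventually $|u_n|_{p(x)} < 1$ and $\rho_p(u_n) \leq |u_n|_{p(x)}^{p^-} \to 0$, while if $\rho_p(u_n) \to 0$ then by (i) eventually $|u_n|_{p(x)} < 1$ and $|u_n|_{p(x)}^{p^+} \leq \rho_p(u_n) \to 0$, so $|u_n|_{p(x)} \to 0$ because $p^+ < \infty$; the divergence statement is handled symmetrically using the $\lambda > 1$ branch of (ii). The only genuine obstacle in the whole argument is the continuity of $\varphi_u$ and the attainment of the value $1$ in the definition of the norm, i.e. establishing the unit ball property rigorously; once that is in hand, (i)--(iii) are purely algebraic manipulations of the power bounds $p^-$ and $p^+$.
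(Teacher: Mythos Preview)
Your argument is correct and is essentially the standard one. The paper itself does not supply a proof of this proposition: it is stated as a preliminary fact with the sentence ``For these and the next results, we refer the reader to \cite{DHHR}.'' So there is no in-paper proof to compare against; your write-up is precisely the kind of derivation one finds in that reference, built around the unit ball identity $\rho_p(u/|u|_{p(x)})=1$ obtained from continuity and strict monotonicity of $\lambda\mapsto\rho_p(u/\lambda)$. One very minor remark: since $u\in L^{p(x)}(\Omega)$ and $p^+<\infty$, the function $\varphi_u$ is in fact finite for \emph{every} $\lambda>0$ (not only for large $\lambda$), via the pointwise bound $\lambda^{-p(x)}\le \max\{\lambda^{-p^-},\lambda^{-p^+}\}$; this slightly streamlines the continuity and intermediate value steps, but does not affect the validity of what you wrote.
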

The following result allows us to relate the norms of different Lebesgue variable  exponent spaces (see \cite[Theorem 2.3]{La} for a proof).
\begin{lemma}\label{product}
Let $r$ be a measurable function and $q \in L^{\infty}_+(\Omega)$. Assume that $0 < r^{-}\leq r(x)\leq q(x)$ for all $x\in \Omega$. Then, if $f\in L^{q(\Omega)}$, there holds
$$\min\{|f|_{q(x)}^{r^{+}},|f|_{q(x)}^{r^{-}}\}\leq ||f|^{r(x)}|_{q(x)/r(x)} \leq  \max\{|f|_{q(x)}^{r^{+}},|f|_{q(x)}^{r^{-}}\}.$$
\end{lemma}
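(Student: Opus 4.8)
The plan is to argue directly from the definition of the Luxemburg norm, using the elementary fact that raising a function to the power $r(x)$ converts the modular of $L^{q(x)/r(x)}(\Omega)$ into that of $L^{q(x)}(\Omega)$. Write $\lambda_{0}:=|f|_{q(x)}$ and $s(x):=q(x)/r(x)$. Since $r(x)\leq q(x)$ and $r^{-}>0$, $q^{+}<\infty$, the exponent $s$ belongs to $L^{\infty}_{+}(\Omega)$; moreover $\rho_{s}(|f|^{r(x)})=\int_{\Omega}|f|^{q(x)}\,dx=\rho_{q}(f)<\infty$, so $|f|^{r(x)}\in L^{s(x)}(\Omega)$ and the middle term in the claim is a well-defined positive number (the case $f=0$ a.e.\ being trivial). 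The computational core is the following two-sided observation. If $\lambda>0$ and $\mu:=\max\{\lambda^{r^{+}},\lambda^{r^{-}}\}$, then $\mu\geq\lambda^{r(x)}$ for a.e.\ $x$, hence $\mu^{q(x)/r(x)}\geq\lambda^{q(x)}$ and therefore
$$\rho_{s}\!\left(\frac{|f|^{r(x)}}{\mu}\right)=\int_{\Omega}\frac{|f(x)|^{q(x)}}{\mu^{q(x)/r(x)}}\,dx\leq\int_{\Omega}\left|\frac{f(x)}{\lambda}\right|^{q(x)}dx=\rho_{q}\!\left(\frac{f}{\lambda}\right).$$
Conversely, if $\mu>0$ and $\lambda:=\max\{\mu^{1/r^{+}},\mu^{1/r^{-}}\}$, then $\lambda\geq\mu^{1/r(x)}$ for a.e.\ $x$, so $\lambda^{q(x)}\geq\mu^{q(x)/r(x)}$ and the inequality reverses: $\rho_{q}(f/\lambda)\leq\rho_{s}(|f|^{r(x)}/\mu)$.

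Given this, the upper bound is immediate: taking $\lambda=\lambda_{0}$, homogeneity of the norm gives $|f/\lambda_{0}|_{q(x)}=1$, whence $\rho_{q}(f/\lambda_{0})=1$ by Proposition \ref{properties modulo}(i); then with $\mu=\max\{\lambda_{0}^{r^{+}},\lambda_{0}^{r^{-}}\}$ the first display yields $\rho_{s}(|f|^{r(x)}/\mu)\leq1$, i.e.\ $\big||f|^{r(x)}\big|_{s(x)}\leq\mu$, which is the right-hand inequality. For the lower bound, let $\mu>0$ be any value with $\rho_{s}(|f|^{r(x)}/\mu)\leq1$ (such $\mu$ exist since $|f|^{r(x)}\in L^{s(x)}(\Omega)$, and $\big||f|^{r(x)}\big|_{s(x)}$ is by definition the infimum of all of them). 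Setting $\lambda=\max\{\mu^{1/r^{+}},\mu^{1/r^{-}}\}$, the reversed estimate gives $\rho_{q}(f/\lambda)\leq1$, hence $|f/\lambda|_{q(x)}\leq1$ by Proposition \ref{properties modulo}(i), i.e.\ $\lambda\geq\lambda_{0}$. Splitting according to whether $\mu\geq1$ or $\mu<1$ turns $\max\{\mu^{1/r^{+}},\mu^{1/r^{-}}\}\geq\lambda_{0}$ into $\mu\geq\lambda_{0}^{r^{-}}$ in the first case and $\mu\geq\lambda_{0}^{r^{+}}$ in the second; in both cases $\mu\geq\min\{\lambda_{0}^{r^{+}},\lambda_{0}^{r^{-}}\}$. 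Passing to the infimum over admissible $\mu$ gives the left-hand inequality.

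The only genuine subtlety — the step I would watch most carefully — is that the $x$-dependence of the exponent $r$ forbids the naive substitution $\mu=\lambda^{r}$; one is forced to sandwich $\lambda^{r(x)}$ between $\lambda^{r^{-}}$ and $\lambda^{r^{+}}$ (in an order that depends on whether $\lambda\geq1$), and it is precisely this that produces the $\max$ and $\min$ over $r^{+},r^{-}$ in the statement and necessitates the small case distinctions on the sizes of $\lambda$ and $\mu$. Aside from that, the argument is merely a careful translation back and forth through the norm--modular correspondence of Proposition \ref{properties modulo}.
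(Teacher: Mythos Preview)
Your argument is correct. The key identity $\rho_{s}(|f|^{r(x)}/\mu)=\int_{\Omega}|f|^{q(x)}\mu^{-q(x)/r(x)}\,dx$ reduces everything to comparing $\mu^{q(x)/r(x)}$ with $\lambda^{q(x)}$, and your two-sided sandwich via $\mu=\max\{\lambda^{r^{+}},\lambda^{r^{-}}\}$ (respectively $\lambda=\max\{\mu^{1/r^{+}},\mu^{1/r^{-}}\}$) handles the $x$-dependence of $r$ cleanly. The case split on $\mu\gtrless 1$ in the lower bound is the right way to invert the max, and the appeal to Proposition~\ref{properties modulo}(i) is legitimate as stated in the paper.

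As for comparison: the paper does not supply its own proof of this lemma --- it simply cites \cite[Theorem 2.3]{La}. So your write-up is not competing against an in-paper argument; rather, it fills in a self-contained proof where the paper defers to an external reference. The approach you take (direct manipulation of the Luxemburg norm definition together with the norm--modular correspondence) is the standard one for results of this type and is essentially what one finds in the cited source.
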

Let us denote the distributional gradient of a given function $u$ by $D u$. Then we can define the variable Sobolev space $W^{1, p(x)}(\Omega)$ as
$$W^{1, p(x)}(\Omega):=\left\lbrace u \in L^{p(x)}(\Omega): |D u|\in L^{p(x)}(\Omega)\right\rbrace,$$
equipped with the norm
$$|u|_{1, p(x)}:=|u|_{p(x)}+|D u|_{p(x)},$$
and we denote by $W_0^{1, p(x)}(\Omega)$ the closure of $\mathcal{C}_0^{\infty}(\Omega)$ in $W^{1,
p(x)}(\Omega)$.  If $p^{-}>1$, then $W^{1, p(x)}(\Omega)$ and $W_0^{1, p(x)}(\Omega)$ are reflexive and separable Banach spaces. 

%
%

\subsection{Anisotropic variable exponent spaces} Next, we turn  our attention to anisotropic Sobolev spaces. Let $\pp:\Omega\to \mathbb{R}^{n}$ denote the vector field
$$\pp(x)=(p_1(x),..., p_n(x)), \quad x \in \Omega,$$where
$$p_i\in L^{\infty}_+(\Omega), \quad \text{for all }i.$$
We define
$$p_M(x):=\sup\left\lbrace p_1(x), ..., p_n(x)\right\rbrace.$$Then, $p_M\in L^{\infty}_+(\Omega).$ The anisotropic variable exponent Sobolev space $W^{1, \pp(x)}(\Omega)$ is defined as
$$W^{1, \pp(x)}(\Omega):=\left\lbrace u \in L^{p_M(x)}(\Omega): \partial_{x_i}u \in L^{p_i(x)}(\Omega),\,i=1,..., n \right\rbrace,$$endowed with the norm
$$|u|_{\pp(x)}:=|u|_{p_M(x)}+\sum_{i=1}^{n}|\partial_{x_i}u|_{p_i(x)}.$$
Observe that the space $W^{1, \pp(x)}(\Omega)$ may be defined equivalently by
$$W^{1, \pp(x)}(\Omega):=\left\lbrace u \in L^{1}_{loc}(\Omega): u\in L^{p_i(x)}(\Omega),\, \partial_{x_i}u \in L^{p_i(x)}(\Omega),\,i=1,..., n \right\rbrace.$$Without any further assumption on $\pp$, $W^{1, \pp(x)}(\Omega)$ is a Banach space (\cite[Theorem 2.1]{Fan2011}). Moreover, if $p_i^{-}>1$, then by \cite[Theorem 2.3]{Fan2011}, the space $W^{1, \pp(x)}(\Omega)$ is  reflexive.  Also, to consider zero boundary condition, define $W_0^{1, \pp(x)}(\Omega)$ as the closure of $C_0^{\infty}(\Omega)$ with respect to the norm in $W^{1, \pp(x)}(\Omega)$.

\begin{remark}Zero boundary conditions may be considered in the trace sense. Indeed, let $\Omega$ be a bounded domain with Lipschitz boundary and for $u \in W^{1, \pp(x)}(\Omega) \subset W^{1, 1}(\Omega)$, we let $u |_{\partial \Omega}$ the boundary trace of $u$ in $W^{1, 1}(\Omega)$. Then, we can define
$$W=\left\lbrace u \in W^{1, \pp(x)}(\Omega): u |_{\partial \Omega}=0 \right\rbrace.$$In the variable exponent case, we can have $W \neq W_0^{1, \pp(x)}(\Omega)$. However, when for all $i$, $p_i$ satisfies the log-H\"{o}lder continuity, that is, there exists $C>0$ such that 
\begin{equation*}
|p_i(x)-p_i(y)|\leq C\frac{1}{|\log|x-y||}, \quad \text{for all } \;x, y \in \Omega,\; x\neq y,\,|x-y|\leq 1/2,
\end{equation*}then by \cite[Theorem 2.4]{Fan2011}, the set $\mathcal{C}_0^{\infty}(\Omega)$ is dense in $W^{1, \pp(x)}(\Omega)$ and so  $W = W_0^{1, \pp(x)}(\Omega)$.
\end{remark}

We end the section by quoting the following embedding result from \cite[Theorem 2.5]{Fan2011}:

\begin{theorem}
Let $\Omega\subset \mathbb{R}^n$ be any bounded domain,  $p_i, q \in L^{\infty}_{+}(\Omega)\cap \mathcal{C}(\overline{\Omega})$ for all $i=1, ..., n$, such that 
$$1 < q(x)< \max\left\lbrace p_M(x), \overline{p}^*(x) \right\rbrace,\quad for\,all\,x\in \overline{\Omega},$$where
\begin{equation}
\overline{p}^*(x):=\begin{cases} \dfrac{n \overline{p}(x) }{n-\overline{p}(x)}, \,if\,\,\overline{p}(x) < n,\\
\infty,\,if\,\,\overline{p}(x) \geq n
\end{cases}
\end{equation}
and
\begin{equation}\label{expo}
\overline{p}(x):=\dfrac{n}{\displaystyle\sum_{i=1}^n\dfrac{1}{p_i(x)}},
\end{equation}
then we have that the embedding 
$$W_0^{1, \pp(x)}(\Omega)\hookrightarrow L^{q(x)}(\Omega)$$is compact.
\end{theorem}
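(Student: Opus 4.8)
The plan is to obtain the compact embedding in two stages: first a \emph{continuous} embedding into the critical anisotropic scale, and then an upgrade to compactness in the strictly subcritical range $q(x)<\max\{p_M(x),\overline{p}^*(x)\}$ by combining almost everywhere convergence with an equi-integrability estimate. The basic analytic input for the continuous step is the constant-exponent anisotropic Sobolev inequality of Troisi: for a constant exponent vector $(a_1,\dots,a_n)$ with harmonic mean $\overline{a}<n$ and $v\in C_0^\infty(\Omega)$,
$$\|v\|_{L^{\overline{a}^*}(\Omega)}\le C\prod_{i=1}^n\|\partial_{x_i}v\|_{L^{a_i}(\Omega)}^{1/n}.$$
To pass to variable exponents I would exploit that each $p_i$ is uniformly continuous on the compact set $\overline{\Omega}$: given $\delta>0$ one covers $\overline{\Omega}$ by finitely many balls $B_1,\dots,B_N$ on which the oscillation of every $p_i$ is below $\delta$. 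Freezing each exponent to its infimum on $B_j$, applying Troisi's inequality to $\varphi_j u$ for a partition of unity $\{\varphi_j\}$ subordinate to the cover, and summing, one controls $u$ locally in $L^{\overline{p}^*(x)-\delta}$; letting $\delta\to 0$ and using Proposition \ref{properties modulo} together with Lemma \ref{product} to pass between modulars and norms yields $W_0^{1,\pp(x)}(\Omega)\hookrightarrow L^{r(x)}(\Omega)$ for every $r$ with $r(x)<\overline{p}^*(x)$ on $\{\overline{p}<n\}$ and $r(x)$ arbitrarily large on $\{\overline{p}\ge n\}$, the latter because there $\overline{p}^*=\infty$.

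The role of $p_M$ in the exponent $\max\{p_M,\overline{p}^*\}$ is then explained by splitting $\Omega=\Omega_1\cup\Omega_2$, where $\Omega_1=\{\overline{p}^*\ge p_M\}$ and $\Omega_2=\{p_M>\overline{p}^*\}$. On $\Omega_1$ the assumption reads $q<\overline{p}^*$, so the Sobolev step above applies; on $\Omega_2$ one has $q<p_M$ and uses the \emph{free} inclusion $u\in L^{p_M(x)}(\Omega)$ built into the definition of $W_0^{1,\pp(x)}(\Omega)$ together with the continuous inclusion $L^{p_M(x)}(\Omega)\hookrightarrow L^{q(x)}(\Omega)$ on the bounded set $\Omega_2$, which is a consequence of H\"older's inequality for the exponents. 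This delivers the continuous embedding into $L^{q(x)}(\Omega)$ and, more importantly, a uniform bound $\sup_k |u_k|_{s(x)}<\infty$ for any bounded sequence $(u_k)$, where $s$ is a continuous exponent chosen strictly between $q(x)$ and $\max\{p_M(x),\overline{p}^*(x)\}$; such an $s$ exists because the inequality $q<\max\{p_M,\overline{p}^*\}$ is strict on the compact set $\overline{\Omega}$, treating $\{\overline{p}\ge n\}$ via the unlimited integrability noted above.

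For the compactness upgrade I would take a bounded sequence $(u_k)$, extract a weakly convergent subsequence $u_k\rightharpoonup u$ in $W_0^{1,\pp(x)}(\Omega)$ by reflexivity, and use the trivial continuous inclusion $W_0^{1,\pp(x)}(\Omega)\hookrightarrow W_0^{1,1}(\Omega)$, valid since $p_i\ge 1$ and $\Omega$ is bounded, together with the classical Rellich--Kondrachov theorem to obtain $u_k\to u$ in $L^1(\Omega)$ and, along a further subsequence, almost everywhere. The strict gap $q(x)<s(x)$ from the previous step forces the family $\{|u_k-u|^{q(x)}\}$ to be equi-integrable, so Vitali's convergence theorem promotes the almost everywhere convergence to convergence of the modular $\rho_q(u_k-u)\to 0$, and Proposition \ref{properties modulo}(iii) converts this into $|u_k-u|_{q(x)}\to 0$, which is precisely the asserted compactness.

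I expect the main obstacle to be the first stage: correctly localizing Troisi's inequality while keeping the constants uniform over the covering and controlling the error produced by freezing the exponents, especially near the interface $\{\overline{p}=n\}$, where $\overline{p}^*$ jumps to infinity and the maximum switches between $p_M$ and $\overline{p}^*$. The variable-exponent bookkeeping --- systematically replacing naive $L^p$ interpolation by the modular estimates of Proposition \ref{properties modulo} and Lemma \ref{product} --- is what makes the otherwise standard Rellich-type scheme delicate in this anisotropic setting.
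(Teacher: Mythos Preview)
The paper does not prove this theorem: it is merely quoted from \cite{Fan2011} (Theorem~2.5 there) as a preliminary result, with no argument supplied. So there is no ``paper's own proof'' to compare against.

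That said, your outline is a reasonable sketch of the standard strategy for such results, and it is in the spirit of Fan's original argument: localize via uniform continuity of the $p_i$ and a finite cover, freeze exponents on each piece, apply the constant-exponent anisotropic Sobolev (Troisi) inequality, and then upgrade to compactness by Rellich--Kondrachov in $W_0^{1,1}$ combined with a Vitali/equi-integrability argument exploiting the strict gap $q(x)<\max\{p_M(x),\overline{p}^*(x)\}$. You correctly identify the genuinely delicate point --- controlling the constants in the localization and handling the region where $\overline{p}$ crosses $n$ --- and your treatment of the role of $p_M$ via the splitting $\Omega_1\cup\Omega_2$ is the right idea. One caution: in the localization step, applying Troisi to $\varphi_j u$ generates derivatives of the cutoff, so you need the lower-order term $\|u\,\partial_{x_i}\varphi_j\|_{L^{p_i}}$ to be absorbed, which requires an iteration or a bound on $u$ in some $L^{p_i}$ that you have not yet justified; this is where the argument needs care beyond what you have written.
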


  Therefore, Problem \eqref{main problems} could be critical in the region where 
$$\overline{p}^*(x) \leq p_M(x).$$Observe that this is never the case when $p$ is a scalar function.

Some further anisotropic Sobolev embedding results in the case $\overline{p}^*(x) > p_M(x)$ for all $x\in \overline{\Omega}$ may be found in \cite{La} and the reference therein.


\section{Preliminaries in Degree Theory}\label{degree theory}

In this section, we will provide a short review of the Degree Theory stated by Berkovits in \cite{Ber} and its generalization \cite{Kim}. We start by providing some definitions.

\begin{definition}
\, Let $Y$ be a real Banach space. A operator $F: \Omega \subset X \to Y$ is said to be
\begin{enumerate}
    \item bounded, if it takes any bounded set into a bounded set.
    \item demicontinuous, if for any sequence $(u_n)_{n\in \mathbb{N}} \subset \Omega$, $u_n \to u$ implies $F(u_n) \rightharpoonup F(u)$.
    \item compact, if it is continuous and the image of any bounded set is relatively compact.
\end{enumerate}
\end{definition}

\begin{definition}
\, A mapping $F: \Omega \subset X \to X^{\ast}$ is said to be
\begin{enumerate}
    \item of class $(S_+)$, if for any sequence $(u_n)_{n\in \mathbb{N}} \subset \Omega$ with $u_n \rightharpoonup u$ and $\limsup_{n\to \infty} \langle Fu_n, u_n - u \rangle \leqslant 0 $ we have $u_n \to u$.
    \item quasimonotone, if for any sequence $(u_n)_{n\in \mathbb{N}} \subset \Omega$ with $u_n \rightharpoonup u$ we have $\limsup_{n\to \infty} \langle Fu_n, u_n - u \rangle \geqslant 0 $.
\end{enumerate}
\end{definition}

Observe that if $F$ is compact, then it is quasimonotone.

\begin{definition}
\, Let $T: \Omega_1 \subset X \to X^{\ast}$ be a bounded operator such that $\Omega \subset \Omega_1$. For any operator $F: \Omega \subset X \to X$, we say that 
\begin{enumerate}
    \item $F$ is class $(S_+)_T$, if for any sequence $(u_n)_{n\in \mathbb{N}} \subset \Omega$ with $u_n \rightharpoonup u$, $y_n:= Tu_n \rightharpoonup y$ and $\limsup_{n\to \infty} \langle Fu_n, y_n - y \rangle \leqslant 0 $, we have $u_n \to u$.
    \item $F$ has the property $(QM)_{T}$, if for any sequence $(u_n)_{n\in \mathbb{N}} \subset \Omega$ with $u_n \rightharpoonup u$, $y_n:= Tu_n \rightharpoonup y$, we have $\limsup_{n\to \infty} \langle Fu_n, y_n - y \rangle \geqslant 0. $
\end{enumerate}
\end{definition}

In the sequel, we consider the following classes of operators. We let $\mathcal{O}$ the collection of all bounded open sets in $X$:
\begin{equation*}
    \begin{split}
        \mathcal{F}_1(\Omega) &:= \bigg\{F: \Omega \to X^{\ast}: \, F \, \mbox{is bounded, demicontinuous and of class} \, (S_+) \bigg\}, \\
        \mathcal{F}_{T,B}(\Omega) &:= \bigg\{F: \Omega \to X: \, F \, \mbox{is bounded, demicontinuous and of class} \, (S_+)_{T} \bigg\}, \\
        \mathcal{F}_{B}(X) &:= \bigg\{F \in \mathcal{F}_{T,B}(\overline{E}): \, E \in \mathcal{O},  T \in \mathcal{F}_1(\overline{E}) \bigg\}, \\
        \mathcal{F}_{T}(\Omega) &:= \bigg\{F: \Omega \to X: \, F \, \mbox{is demicontinuous and of class} \, (S_{+})_T \bigg\}, \\
    \end{split}
\end{equation*}
for any $\Omega \subset \mbox{D}(F)$, where $\mbox{D}(F)$ denotes the domain of $F$, and any $T \in \mathcal{F}_1(\Omega)$. Now, we define 
$$\mathcal{F}(X):= \bigg \{F \in \mathcal{F}_{T}(\overline{E}): E \in \mathcal{O}, T \in \mathcal{F}_1(\overline{E}) \bigg\},$$
when $f \in $ $T \in \mathcal{F}_1(\overline{E})$ to some $T$, we call $T$ an essential inner map to $F$.

\begin{lemma}\label{lemma KH}
\, [\cite{Kim}, Lemma 2.3] Let $T \in \mathcal{F}_1 (\overline{E})$ be continuous and $S : \mbox{D}(S) \subset X^{\ast} \to X$ be demicontinuous,
such that $T(\overline{E}) \subset \mbox{D}(S)$, where $E$ is a bounded open set in a real reflexive Banach space $X$. Then, the
following statements are true:
\begin{enumerate}
    \item If $S$ is quasimonotone, then $I + S \circ T \in \mathcal{F}_{T}(\overline{E})$, where $I$ denotes the identity operator.
    \item If $S$ of class $(S_+)$, then $S \circ T \in \mathcal{F}_{T}(\overline{E})$.
\end{enumerate}
\end{lemma}

\begin{definition}
     Suppose that $E$ is a bounded open subset of a real reflexive Banach space $X$, $T \in \mathcal{F}_1(\overline{E})$ is
continuous and let $F,S \in F_T (\overline{E})$.
The \textit{affine homotopy} $H : [0, 1] \times E \to X$ defined by $H(t,u):= (1-t) Fu + t Su,$  for all $(t,u) \in [0,1] \times E$ is called an admissible affine homotopy with the common continuous essential inner map $T$.
\end{definition}
In \cite{Kim} Lemma 2.5, it is proved that the above affine homotopy is of class $(S_+)_T$. Next, following \cite{Kim}, we give the topological degree for the class $\mathcal{F}(X)$.

\begin{theorem}\label{main theorem degree}
\, Let
$M = 
\{(F, E, h) : E \in \mathcal{O}, T \in \mathcal{F}_1(E), F \in \mathcal{F}_{T}(E), h \notin  F(\partial E)\}$.
Then, there exists a unique degree function $d : M \to \ZZ$ that satisfies the following properties:
\begin{enumerate}
    \item (Normalization).  $d(I,E,h) = 1$, for all $h \in E$.
    \item (Additivity).  Let $F \in \mathcal{F}_{T}(E)$. If  $E_1$ and $E_2$ are two disjoint open subsets of $E$, such that $h \notin F(\overline{E} \setminus (E1 \cup E2))$. Then we have
$$d(F, E, h) = d(F, E1, h) + d(F, E2, h).$$
    \item (Homotopy invariance).  If $H : [0, 1] \times \overline{E} \to X$ is a bounded admissible affine homotopy with a common
continuous essential inner map and $h: [0, 1] \to X$ is a continuous path in $X$, such that $h(t) \notin H(t, \partial E)$, for all $t \in [0, 1]$, then $d(H(t, \cdot), E, h(t))$ is constant, for all $t \in [0, 1]$.
    \item (Existence). \, If $d(F, E, h)\neq 0$, then the equation $Fu = h$ has a solution in $E$.
\end{enumerate}
\end{theorem}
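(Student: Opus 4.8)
The plan is to \emph{construct} the degree $d$ on $M$ by reduction to the Berkovits degree for the class $\mathcal{F}_1$ of bounded demicontinuous $(S_+)$-maps into $X^{\ast}$ (which in turn is built from the Leray--Schauder and ultimately the Brouwer degree), and then to establish \emph{uniqueness} from the four axioms by the classical axiomatic characterization reduced to finite dimensions. Throughout I would use that $X$ is real, reflexive and separable, so that after an equivalent renorming the normalized duality map $J:X\to X^{\ast}$ is bounded, demicontinuous, bijective and of class $(S_+)$, with $J^{-1}$ of the same type. The essential inner map $T\in\mathcal{F}_1(\overline{E})$ attached to $F$ plays the role of the variable against which the $(S_+)_T$ condition is tested, and it is the pivot of the whole reduction.

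For the construction, given $(F,E,h)\in M$ with $F\in\mathcal{F}_T(\overline{E})$ and $h\notin F(\partial E)$, I would associate to $(F,T,h)$ an auxiliary map in $\mathcal{F}_1(\overline{E})$ whose Berkovits $(S_+)$-degree, written $d_{(S_+)}$, defines the value $d(F,E,h)$. Concretely, the auxiliary map is obtained by composing $F-h$ with $J$ and coupling it to $T$ so that the resulting operator $\overline{E}\to X^{\ast}$ is bounded, demicontinuous and genuinely of class $(S_+)$; the hypothesis $h\notin F(\partial E)$ guarantees that this auxiliary map does not vanish on $\partial E$, so that $d_{(S_+)}$ is defined. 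The first substantive point is \emph{well-definedness}: the value must not depend on the particular admissible essential inner map $T$ nor on the auxiliary choices. This I would prove using the homotopy invariance of $d_{(S_+)}$ together with the already quoted facts that, for $S$ of class $(S_+)$ or quasimonotone, $S\circ T$ and $I+S\circ T$ lie in $\mathcal{F}_T(\overline{E})$ (Lemma \ref{lemma KH}) and that admissible affine homotopies with a common continuous inner map are of class $(S_+)_T$ (\cite{Kim}, Lemma 2.5): any two admissible choices are joined by such a homotopy that avoids $h$ on $\partial E$, so the $d_{(S_+)}$ of the corresponding auxiliary maps coincide.

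With $d$ defined, I would verify the four listed properties by transferring each from the corresponding property of $d_{(S_+)}$. \emph{Normalization} follows because the auxiliary map attached to $F=I$ is, up to the homotopy used above, the duality map $J$, whose $(S_+)$-degree on $E$ at an interior point equals $1$. \emph{Additivity} is inherited from the additivity (excision) of $d_{(S_+)}$, since the disjoint decomposition of $E$ and the condition $h\notin F(\overline{E}\setminus(E_1\cup E_2))$ pass verbatim to the auxiliary operator. \emph{Homotopy invariance} is the core of the transfer: given a bounded admissible affine homotopy $H(t,\cdot)$ with common continuous essential inner map and a path $h(t)\notin H(t,\partial E)$, the cited fact that $H$ is of class $(S_+)_T$ lets me form a homotopy of auxiliary $(S_+)$-maps to which the homotopy invariance of $d_{(S_+)}$ applies, yielding constancy of $d(H(t,\cdot),E,h(t))$. \emph{Existence} follows from the solvability property of $d_{(S_+)}$: if $d(F,E,h)\neq 0$, then the auxiliary $(S_+)$-map has a zero in $E$, which unwinds to a solution of $Fu=h$.

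Finally, for \emph{uniqueness}, I would argue that any two functions $d_1,d_2:M\to\ZZ$ satisfying (1)--(4) must agree. The standard route is to show that the axioms force the value on a generating subclass and then extend by homotopy and additivity: using admissible affine homotopies with a common inner map, one connects a given $F$ to a model map for which normalization fixes the degree, localizing by additivity; the finite-dimensional core is handled by a Galerkin reduction to the Brouwer degree, which is the unique function satisfying normalization, additivity and homotopy invariance (the Amann--Weiss characterization). Since the construction above satisfies (1)--(4), it must coincide with any such $d_i$, giving both existence and uniqueness. I expect the \textbf{main obstacle} to be precisely the well-definedness of the construction together with the homotopy step of uniqueness: in both, one must build admissible homotopies that keep the inner map common and continuous while ensuring $h(t)\notin H(t,\partial E)$ throughout, so that the $(S_+)$-degree of the auxiliary maps may legitimately be invoked and held constant.
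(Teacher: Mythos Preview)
The paper does \emph{not} prove this theorem; it is quoted as a known result from \cite{Kim}, and the only content the paper adds is the definition that immediately follows the statement:
\[
d(F,E,h):= d_{B}\bigl(F|_{\overline{E_0}}, E_0, h\bigr),
\]
where $d_B$ is the Berkovits degree on $\mathcal{F}_B(X)$ and $E_0\subset E$ is any bounded open set with $F^{-1}(h)\subset E_0$ and $F$ bounded on $\overline{E_0}$. So the paper's ``proof'' is a one-line reduction to the already-constructed Berkovits degree for \emph{bounded} $(S_+)_T$-maps, with well-definedness and the four properties inherited directly from $d_B$ via \cite[Corollary 2.8]{Kim}.

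Your route is genuinely different: rather than passing through the intermediate Berkovits degree $d_B$ on $\mathcal{F}_B(X)$, you propose to go all the way down to the $(S_+)$-degree on $\mathcal{F}_1$ in one step, by building from $(F,T,h)$ an auxiliary map $\overline{E}\to X^{\ast}$ via the duality map $J$. This is essentially how Berkovits constructs $d_B$ in the first place, so you are collapsing two layers of the construction into one. That is legitimate in spirit and would yield a self-contained argument, whereas the paper's approach is shorter precisely because it treats $d_B$ as a black box.

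There is, however, a concrete gap in your sketch. The class $\mathcal{F}_T(\overline{E})$ does \emph{not} require $F$ to be bounded, and if $F$ is unbounded then any auxiliary map built from $F-h$ by composition with $J$ and ``coupling to $T$'' will also be unbounded, hence fails to lie in $\mathcal{F}_1(\overline{E})$, and the $(S_+)$-degree $d_{(S_+)}$ is not defined for it. The paper's (i.e.\ \cite{Kim}'s) definition handles exactly this point by first restricting to the bounded open set $E_0$ on which $F$ is bounded; your construction needs an analogous localization step before the auxiliary map can be formed, and you should also verify that the resulting value is independent of the choice of $E_0$. Once that localization is inserted, your approach and the paper's become equivalent, just expressed at different levels of the degree hierarchy.
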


\begin{definition}
    \, The above degree $d$ is defined as follows:
    \begin{equation*}
        d(F,E,h):= d_{B}(F|_{\overline{E_0}}, E_0,h),
    \end{equation*}
    where $d_B$ its the Berkovits degree \cite{Ber} and $E_0$ is any open subset of $E$ with $F^{-1}(h) \subset E_0$ and $F$ is bounded on $\overline{E}_0$ (see \cite[Corollary 2.8]{Kim}).
\end{definition}

\section{Proof of Theorem \ref{existence}}\label{Proof of existence}

We start with some preliminary lemmas.

\begin{lemma}\label{lemma S}
Assume that $(f1)$ and $(f2)$ hold. Then, the operator $S: W_0^{1, \pp(x)}(\Omega)\to W^{-1, \pp'(x)}(\Omega)$ defined by
$$\left\langle Su, v\right\rangle = -\int_\Omega f(x, u, D u)v\,dx$$is compact.   
\end{lemma}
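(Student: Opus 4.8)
\textbf{Proof plan for Lemma \ref{lemma S}.}

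The plan is to factor $S$ through a Nemytskii (superposition) operator and use the compact anisotropic Sobolev embedding. First I would introduce the exponent $q(x)$ from hypothesis $(f2)$ and observe, via the embedding theorem from \cite{Fan2011} quoted above, that $W_0^{1,\pp(x)}(\Omega)\hookrightarrow L^{q(x)}(\Omega)$ is \emph{compact}: this is legitimate because $(f2)$ forces $q^+ < p_m \le p_M^-\le p_M(x)$, so $1<q(x)<\max\{p_M(x),\overline p^*(x)\}$ pointwise. Likewise, each partial derivative map $u\mapsto \partial_{x_i}u$ sends $W_0^{1,\pp(x)}(\Omega)$ continuously into $L^{p_i(x)}(\Omega)$, and since $r_i(x)q'(x)\le p_i(x)$, Lemma \ref{product} shows $|\partial_{x_i}u|^{r_i(x)}$ lies in $L^{q(x)/(q(x)-1)\cdot(\text{something})}$; the precise target is dictated by the requirement that the product against $v$ (which lies in $L^{q(x)}$) be integrable. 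The key numerology to record is that $q'(x)$ is the conjugate of $q(x)$, $k\in L^{q'(x)}(\Omega)$ by assumption, $|u|^{q(x)-1}\in L^{q'(x)}(\Omega)$, and $|\xi_i|^{r_i(x)}$ with $r_i(x)q'(x)\le p_i(x)$ also lies in $L^{q'(x)}(\Omega)$ once we substitute $\xi_i=\partial_{x_i}u$; hence by $(f2)$ and the triangle inequality, $f(\cdot,u,Du)\in L^{q'(x)}(\Omega)$ with norm controlled by a function of $|u|_{1,\pp(x)}$, so $S$ is well defined and bounded into $\big(L^{q(x)}(\Omega)\big)^*=L^{q'(x)}(\Omega)\hookrightarrow W^{-1,\pp'(x)}(\Omega)$ (the last embedding being the dual of the compact embedding, hence also compact).

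Next I would prove continuity of $S$. Take $u_n\to u$ in $W_0^{1,\pp(x)}(\Omega)$. Then $u_n\to u$ in $L^{q(x)}(\Omega)$ and $\partial_{x_i}u_n\to\partial_{x_i}u$ in $L^{p_i(x)}(\Omega)$ for each $i$; passing to a subsequence we get a.e. convergence and domination by fixed $L^{q(x)}$ (resp. $L^{p_i(x)}$) functions. By $(f1)$, $f(x,u_n,Du_n)\to f(x,u,Du)$ a.e., and by $(f2)$ the sequence is dominated in $L^{q'(x)}(\Omega)$ by $C(|k|+|u_n|^{q(x)-1}+\sum_i|\partial_{x_i}u_n|^{r_i(x)})$, whose $q'(x)$-modular converges (generalized dominated convergence / Vitali, using that the dominating terms converge in the respective modulars). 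Therefore $f(\cdot,u_n,Du_n)\to f(\cdot,u,Du)$ in $L^{q'(x)}(\Omega)$, which gives $Su_n\to Su$ in $L^{q'(x)}(\Omega)$ and a fortiori in $W^{-1,\pp'(x)}(\Omega)$. The usual subsequence argument upgrades this to convergence of the full sequence, so $S$ is continuous.

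Finally, compactness: let $(u_n)$ be bounded in $W_0^{1,\pp(x)}(\Omega)$. By reflexivity and the compact embedding $W_0^{1,\pp(x)}(\Omega)\hookrightarrow\hookrightarrow L^{q(x)}(\Omega)$, after passing to a subsequence $u_n\rightharpoonup u$ weakly in $W_0^{1,\pp(x)}(\Omega)$ and $u_n\to u$ strongly in $L^{q(x)}(\Omega)$. The delicate point is that strong convergence in $L^{q(x)}$ alone does not control the gradient terms $|\partial_{x_i}u_n|^{r_i(x)}$ in the estimate for $f$; this is precisely where the strict inequalities $r_i^+ < \frac{p_i^-}{p_m}(p_m-1)$ are used. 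One writes, for $\langle Su_n-Su_m,v\rangle$, the three groups of terms and estimates the gradient group by Hölder in variable exponents together with Lemma \ref{product}, interpolating the $L^{r_i(x)q'(x)}$-type norm of $\partial_{x_i}u_n$ (which is bounded since $r_i(x)q'(x)\le p_i(x)$) against a small power coming from the strong $L^{q(x)}$ convergence — the exponent budget $r_i^+<\frac{p_i^-}{p_m}(p_m-1)$ is exactly what makes this interpolation close, yielding that $(Su_n)$ is Cauchy in $W^{-1,\pp'(x)}(\Omega)$. (Alternatively, and more cleanly, one invokes the compactness of the dual embedding $L^{q'(x)}(\Omega)\hookrightarrow\hookrightarrow W^{-1,\pp'(x)}(\Omega)$ together with the boundedness of $S$ into $L^{q'(x)}(\Omega)$ established in the first paragraph: a bounded-then-compact composition is compact, provided one also checks continuity as above.) I expect the main obstacle to be this last step — verifying that the growth restrictions on $r_i$ genuinely suffice to pass the gradient contributions through, i.e. bookkeeping the variable-exponent Hölder and Lemma \ref{product} estimates so that the bound on $S$ lands in a space compactly embedded in $W^{-1,\pp'(x)}(\Omega)$; once that is in place, the factorization ``bounded into $L^{q'(x)}$'' $\circ$ ``compact embedding'' finishes the proof.
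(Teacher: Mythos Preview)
Your ``alternative'' clean factorization is exactly what the paper does: it defines the Nemytskii map $\phi:W_0^{1,\pp(x)}(\Omega)\to L^{q'(x)}(\Omega)$ by $\phi u=-f(\cdot,u,Du)$, checks (using only $k\in L^{q'(x)}$, $q\le p_M$, and $r_i(x)q'(x)\le p_i(x)$) that $\phi$ is well defined, bounded and continuous, and then writes $S=I^{*}\circ\phi$, where $I^{*}$ is the adjoint of the compact inclusion $I:W_0^{1,\pp(x)}(\Omega)\hookrightarrow L^{q(x)}(\Omega)$; since the adjoint of a compact operator is compact, $S$ is compact. So your plan is correct and essentially identical to the paper's argument.

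One correction, though: your first, longer route to compactness is unnecessary, and the sentence ``this is precisely where the strict inequalities $r_i^{+}<\tfrac{p_i^{-}}{p_m}(p_m-1)$ are used'' misplaces that hypothesis. The compactness of $S$ requires only $r_i(x)q'(x)\le p_i(x)$ (so that $|\partial_{x_i}u|^{r_i(x)}\in L^{q'(x)}$); no interpolation against strong $L^{q(x)}$-convergence of $u_n$ is needed, because the gradient terms are handled entirely by the boundedness of $\phi$ into $L^{q'(x)}$ followed by the compact dual embedding. The sharper condition $r_i^{+}<\tfrac{p_i^{-}}{p_m}(p_m-1)$ enters only later, in the proof of Theorem~\ref{existence}, where it is exactly what makes the exponent $\tfrac{r_i^{+}}{p_i^{-}}+\tfrac{1}{p_m}<1$ in the estimate for $\int_\Omega|\partial_{x_i}u_n|^{r_i(x)}|u_n|\,dx$ and hence yields boundedness of the set $B$.
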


\begin{proof}
We will divide the proof in four stages.

\textbf{Step 1:} the operator $S$ is well-defined. Indeed, let $u, v \in W_0^{1, \pp(x)}(\Omega)$. First, observe that under assumption $(f1)$, we have
$$k+|u|^{q(x)-1} + \sum_{i=1}^n |\partial_{x_i}u|^{r_i(x)}\in L^{q'(x)}(\Omega).$$In fact, 
\begin{itemize}
\item $k \in L^{q'(x)}(\Omega)$ by assumption.
\item Recalling that $q(x)\leq p_M(x)$ and Proposition \ref{properties modulo}, we get
\begin{equation}
\begin{split}
\int_\Omega |u|^{(q(x)-1)q'(x)}\,dx = \int_\Omega |u|^{q(x)}\,dx \leq |u|_{q(x)}^{q^{-}}+ |u|_{q(x)}^{q^{+}} \leq C\left( |u|_{p_M(x)}^{q^{-}}+ |u|_{p_M(x)}^{q^{+}}\right)<\infty.
\end{split}
\end{equation}
\item Similarly, observing that $1\leq r_i(x)q'(x)\leq p_i(x)$ for $i=1, ..., n$, we derive
$$\int_\Omega |\partial_{x_i}u|^{r_i(x)q'(x)}\,dx \leq C\left( |\partial_{x_i}u|_{p_i(x)}^{\alpha_i^{-}}+ |\partial_{x_i}u|_{p_i(x)}^{\alpha_i^{+}}\right)<\infty,$$where $\alpha_i(x)= r_i(x)q'(x)$.
\end{itemize}Thus, by $(f2)$, we obtain $f(x, u, D u)\in L^{q'(x)}(\Omega)$. Therefore, 
\begin{equation}
\begin{split}
|\left\langle Su, v \right\rangle| \leq \int_\Omega |f(x, u, D u)||v|\,dx \leq C|f(x, u, D u)|_{q'(x)}|v|_{q(x)} \leq  C|f(x, u, D u)|_{q'(x)}|v|_{1, \pp(x)},
\end{split}
\end{equation}where we have used that $q \leq p_M$. Thus, $S$ is well-defined.

\textbf{Step 2:} let $\phi: W_0^{1, \pp(x)}(\Omega)\to L^{q'(x)}(\Omega)$ be the operator defined by
$$\phi u(x):=-f(x, u(x), D u(x)), \quad u \in  W_0^{1, \pp(x)}(\Omega).$$We next show that $\phi$ is well-defined, sends bounded set into bounded sets (that is, $\phi$ is a bounded operator), and it is continuous.

Indeed, by the previous step we know that $f(x, u(x), D u(x)) \in L^{q'(x)}(\Omega)$ so $\phi$ is well-defined. Moreover, the previous step also implies that
$$|\phi u|_{q'(x)} \leq C\left(1+ |u|_{1, \pp(x)}^{q^{+}}+|u|_{1, \pp(x)}^{q^{-}}+\sum_{i=1}^n |u|_{1, \pp(x)}^{\alpha_i^{+}}+\sum_{i=1}^n |u|_{1, \pp(x)}^{\alpha_i^{-}}\right),$$so in particular, $\phi$ is a bounded operator. 

To show that $\phi$ is continuous, let $u_k\to u$ in $W_0^{1, \pp(x)}(\Omega)$. Then, $u_k\to u$ in $L^{p_M(x)}(\Omega)$ and $\partial_{x_i}u_k\to \partial_{x_i}u$ in $L^{p_i(x)}(\Omega)$ for all $i$. Take any subsequence $u_{k_l}$ of $u_k$, that we will still denote by $u_k$.  Hence, there exist functions $v \in L^{p_M(x)}(\Omega)$ and $w_i \in L^{p_i(x)}(\Omega)$, $i=1, ..., n$, such that for a further subsequence, still denoted by $u_k$, there holds
$$|u_k|\leq v(x) \quad \text{ and }\quad |\partial_{x_i}u_k |\leq w_i(x), \quad i=1,..., n,$$for a. e. $x\in \Omega$. Thus,  by assumption $(f2)$,
$$f(x, u_k, D u_k)\leq C\left(|k| + v(x)^{q(x)-1}+\sum_{i=1}^nw^{r_i(x)}_i(x) \right)\in L^{q'(x)}(\Omega)$$and
$$|f(x, u_k, D u_k)-f(x, u, D u)|^{q'(x)}\to 0\quad a.e. \text{ in }\Omega.$$Thus, by Lebesgue Theorem,
$$\int_\Omega|f(x, u_k, D u_k)-f(x, u, D u)|^{q'(x)}\,dx\to 0, \quad \text{as }k\to \infty.$$Since this holds for any subsequence of $u_k$, we prove that $\phi$ is continuous. 

\textbf{Step 3:} $S$ is compact. Observe that $S= I^{*}\circ \phi$, where $I^{*}$ is the adjoint operator of the inclusion $I: W^{1, \pp(x)}(\Omega)\to L^{q(x)}(\Omega)$ (which is compact). Hence, we conclude that $S$ is compact. 
\end{proof}

\begin{lemma}\label{lemma f}
The mapping $F: W_0^{1, \pp(x)}(\Omega)\to W_0^{-1, \pp'(x)}(\Omega)$ defined by
$$\left\langle Fu, v\right\rangle :=\sum_{i=1}^n\int_\Omega |\partial_{x_i}u|^{p_i(x)-2}\partial_{x_i}u\,\partial_{x_i}v\,dx + \int_\Omega|u|^{p_M(x)-2}uv\,dx$$for all $u, v \in  W_0^{1, \pp(x)}(\Omega)$, is  bounded, coercive, continuous, strictly monotone and an $(S_+)$-operator. 
\end{lemma}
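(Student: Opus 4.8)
The plan is to verify each of the five listed properties of the operator $F$ in turn, relying throughout on the standard machinery for variable exponent spaces recalled in Section \ref{Preliminary section} (notably Proposition \ref{properties modulo} and H\"older's inequality) together with elementary monotonicity inequalities for the vector fields $\xi \mapsto |\xi|^{p-2}\xi$.

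\textbf{Boundedness and continuity.} First I would estimate $|\langle Fu, v\rangle|$ by splitting into the $n$ flux terms and the zero-order term. For the $i$-th flux term, H\"older's inequality in $L^{p_i(x)}$ gives a bound by $\big||\partial_{x_i}u|^{p_i(x)-1}\big|_{p_i'(x)}\,|\partial_{x_i}v|_{p_i(x)}$, and Lemma \ref{product} (applied with $r(x) = p_i(x)-1 \le p_i(x)$, or directly via Proposition \ref{properties modulo}) controls the first factor by $\max\{|\partial_{x_i}u|_{p_i(x)}^{p_i^- - 1}, |\partial_{x_i}u|_{p_i(x)}^{p_i^+ - 1}\}$; the zero-order term is handled the same way in $L^{p_M(x)}$. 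Summing and using that each piece of $|u|_{\pp(x)}$ dominates the corresponding norm, one gets $\|Fu\|_{*} \le C(1 + |u|_{\pp(x)}^{p^+ - 1})$ on bounded sets, hence $F$ is bounded. For continuity, I would take $u_k \to u$ in $W_0^{1,\pp(x)}$, so $\partial_{x_i}u_k \to \partial_{x_i}u$ in $L^{p_i(x)}$ and $u_k \to u$ in $L^{p_M(x)}$; passing to a subsequence with a.e.\ convergence and integrable dominating functions, the Nemytskii-type maps $w \mapsto |w|^{p_i(x)-2}w$ are continuous from $L^{p_i(x)}$ into $L^{p_i'(x)}$ (this is the standard continuity-of-superposition-operator argument, using the generalized dominated convergence theorem), so $Fu_k \rightharpoonup Fu$ in the dual; combined with $\|Fu_k\| \to \|Fu\|$-type control this upgrades to the required convergence. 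Since the subsequence was arbitrary, $F$ is (demi)continuous, and in fact continuous.

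\textbf{Strict monotonicity and the $(S_+)$ property.} For strict monotonicity I would use the classical pointwise inequalities: for $p \ge 2$, $(|a|^{p-2}a - |b|^{p-2}b)\cdot(a-b) \ge c_p|a-b|^p$, and for $1 < p < 2$ the analogue $(|a|^{p-2}a - |b|^{p-2}b)\cdot(a-b) \ge c_p \frac{|a-b|^2}{(|a|+|b|)^{2-p}}$; in either regime the left side is strictly positive whenever $a \ne b$. Applying this coordinatewise to $\partial_{x_i}u$ versus $\partial_{x_i}v$ and to $u$ versus $v$ shows $\langle Fu - Fv, u-v\rangle > 0$ when $u \ne v$. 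The $(S_+)$ property is the crux: given $u_k \rightharpoonup u$ with $\limsup_k \langle Fu_k, u_k - u\rangle \le 0$, I would write $\langle Fu_k - Fu, u_k - u\rangle = \langle Fu_k, u_k - u\rangle - \langle Fu, u_k-u\rangle$; the second term vanishes by weak convergence, so $\limsup_k \langle Fu_k - Fu, u_k - u\rangle \le 0$, while monotonicity forces this quantity to be nonnegative, hence it converges to $0$. Each summand in it is nonnegative, so each summand $\to 0$; via the pointwise monotonicity inequalities this yields $\int_\Omega |\partial_{x_i}u_k - \partial_{x_i}u|^{p_i(x)} \to 0$ (after a Young/H\"older interpolation in the singular case $1 < p_i < 2$ to pass from the weighted expression back to the modular) and likewise $\int_\Omega |u_k - u|^{p_M(x)} \to 0$. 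By Proposition \ref{properties modulo}(iii), modular convergence to $0$ is equivalent to norm convergence, so $|\partial_{x_i}u_k - \partial_{x_i}u|_{p_i(x)} \to 0$ for every $i$ and $|u_k - u|_{p_M(x)} \to 0$, i.e.\ $u_k \to u$ in $W_0^{1,\pp(x)}(\Omega)$.

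\textbf{Coercivity.} Finally, $\langle Fu, u\rangle = \sum_i \int_\Omega |\partial_{x_i}u|^{p_i(x)} + \int_\Omega |u|^{p_M(x)} = \sum_i \rho_{p_i}(\partial_{x_i}u) + \rho_{p_M}(u)$. Using Proposition \ref{properties modulo}(ii), when $|u|_{\pp(x)}$ is large at least one of the constituent norms is $\ge 1$ and the corresponding modular dominates that norm raised to the relevant $p^-$; combining the $n+1$ estimates and using equivalence of norms on the finite sum gives $\langle Fu, u\rangle / |u|_{\pp(x)} \to \infty$ as $|u|_{\pp(x)} \to \infty$, which is coercivity. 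The main obstacle is the $(S_+)$ verification in the singular range $1 < p_i(x) < 2$ (and where $p_i(x)$ crosses $2$ within $\Omega$): there the monotonicity inequality is only a weighted one, so recovering modular convergence requires an extra H\"older-with-variable-exponents interpolation step of the type $\int |a_k - a|^{p} \le \big(\int \text{weighted quantity}\big)^{\theta}\big(\int(|a_k|+|a|)^{p}\big)^{1-\theta}$ with the second factor bounded by boundedness of $(u_k)$; handling the $x$-dependence of the splitting exponent $\theta = \theta(x)$ carefully is the delicate point, and one typically argues on the sets $\{p_i(x) \ge 2\}$ and $\{p_i(x) < 2\}$ separately.
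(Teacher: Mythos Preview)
Your proposal is correct and follows essentially the same route as the paper: boundedness via H\"older and Lemma~\ref{product}, continuity by the Nemytskii-operator argument, strict monotonicity and the $(S_+)$ property via the standard degenerate/singular pointwise inequalities with the domain split into $\{p_i(x)\ge 2\}$ and $\{p_i(x)<2\}$ (the paper carries out explicitly the variable-exponent H\"older interpolation you flag as the delicate point), and coercivity through the modular--norm relations of Proposition~\ref{properties modulo}. The only cosmetic remark is that in the continuity step you do not need the ``$\|Fu_k\|\to\|Fu\|$-type control'' add-on: the Nemytskii maps $w\mapsto |w|^{p_i(x)-2}w$ are already strongly continuous from $L^{p_i(x)}$ to $L^{p_i'(x)}$, which gives $Fu_k\to Fu$ in the dual directly.
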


\begin{proof}
We start proving that $F$ is bounded. Let $u, v \in  W_0^{1, \pp(x)}(\Omega)$. Then,
\begin{equation}
\begin{split}
|\left\langle Fu, v \right\rangle| &\leq \sum_{i=1}^n \int_\Omega |\partial_{x_i}u|^{p_i(x)-1}|\partial_{x_i}v|\,dx +\int_{\Omega}|u|^{p_M(x)-1}|v|\, dx \\&\leq  C\left(\sum_{i=1}^n ||\partial_{x_i}u|^{p_i(x)-1}|_{p_i'(x)}|\partial_{x_i}v|_{p_i(x)} + ||u|^{p_M(x)-1}|_{p_M'(x)}|v|_{p_M(x)} \right).
\end{split}
\end{equation}Applying Lemma  \ref{product}, we conclude that
\begin{equation*}
\begin{split}
|\left\langle Fu, v \right\rangle| &\leq C|v|_{1, \pp(x)}\left( \sum_{i=1}^n \max\left\lbrace |\partial_{x_i}u|_{p_i(x)}^{p^+_1-1},|\partial_{x_i}u|_{p_i(x)}^{p^-_1-1} \right\rbrace + \max\left\lbrace |u|_{p_M(x)}^{p_M^+ -1}, |u|_{p_M(x)}^{p_M^- -1} \right\rbrace\right)
\end{split}
\end{equation*}This shows that $F$ sends bounded sets into bounded sets. 

By a  similar reasoning as in the proof of Step 2 in the previous lemma, we conclude that $F$ is continuous. 


We next show that $F$ is an $(S_+)$-operator. Let $u_n \rightharpoonup u$ in $ W_0^{1, \pp(x)}(\Omega)$ and assume that
\begin{equation}\label{asump S}
\limsup_{n\to \infty}\left\langle Fu_n, u_n-u\right\rangle\leq 0.
\end{equation}First, the weak convergence implies that \eqref{asump S} may be written as
\begin{equation}\label{asump SS}
\left\langle Fu_n, u_n-u\right\rangle = \left\langle Fu_n-Fu, u_n-u\right\rangle + \left\langle Fu, u_n-u\right\rangle = \left\langle Fu_n-Fu, u_n-u\right\rangle + o_n(1).
\end{equation}

We will appeal to the following well-known inequalities:
\begin{equation}\label{ineq degenerate}
(|\xi|^{p-2}\xi - |\eta|^{p-2}\eta)(\xi-\eta)\geq C|\xi-\eta|^{p}, \quad p \geq 2,
\end{equation}and
\begin{equation}\label{ineq singular}
(|\xi|^{p-2}\xi - |\eta|^{p-2}\eta)(\xi-\eta)\geq (p-1)\dfrac{|\xi -\eta|^2}{(|\xi|+|\eta|)^{2-p}}, \quad 1< p < 2.
\end{equation}
Let also consider the following sets
$$\Omega_{1, i}:=\left\lbrace x\in \Omega: p_i(x)\geq 2\right\rbrace$$
and
$$\Omega_{2, i}:=\left\lbrace x\in \Omega: 1 < p_i(x)< 2\right\rbrace,$$for $i=1, ..., n$, and
$$\Omega_{1, M}:=\left\lbrace x\in \Omega: p_M(x)\geq 2\right\rbrace,$$
$$\Omega_{2, M}:=\left\lbrace x\in \Omega: 1 < p_M(x)< 2\right\rbrace.$$
Then, in the next calculations we will use the inequality \eqref{ineq degenerate} over the sets $\Omega_{1, (\cdot)}$, and inequality \eqref{ineq singular} in $\Omega_{2, (\cdot)}$. We start with the degenerate case.  Let $i$ such that $\Omega_{1, i}\neq \emptyset$. Then by \eqref{ineq degenerate}, we obtain
\begin{equation}\label{ineq Ff4}
\begin{split}
& \int_{\Omega_{1, i}} \left(|\partial_{x_i}u_n|^{p_i(x)-2}\partial_{x_i}u_n-|\partial_{x_i}u|^{p_i(x)-2}\partial_{x_i}u\right)\left(\partial_{x_i}u_n-\partial_{x_i}u\right)\,dx  \geq C\int_{\Omega_{1, i}} |\partial_{x_i}(u_n-u)|^{p_i(x)}\,dx.
\end{split}
\end{equation}If $\Omega_{1, M}\neq \emptyset$, then by \eqref{ineq degenerate}
\begin{equation}\label{ineq Ff5}
\int_{\Omega_{1, M}}\left(|u_n|^{p_M(x)-2}u_n-|u|^{p_M(x)-2}u\right)(u_n-u)\,dx \geq C\int_{\Omega_{1, i}} |u_n-u|^{p_M(x)}\,dx.
\end{equation}Next, we consider singular cases. Suppose that $\Omega_{2, i}\neq \emptyset$ for some $i$ fixed.  Then by H\"{o}lder inequality
\begin{equation}\label{ineqq Ff1}
\begin{split}
& \int_{\Omega_{2, i}}|\partial_{x_i}u_n-\partial_{x_i}u|^{p_i(x)}\,dx\\ & \quad  = \int_{\Omega_{2, i}}\dfrac{|\partial_{x_i}u_n-\partial_{x_i}u|^{p_i(x)}}{(|\partial_{x_i}u_n|+|\partial_{x_i}u|)^{(2-p_i(x))p_i(x)/2}}.\left(|\partial_{x_i}u_n|+|\partial_{x_i}u|\right)^{(2-p_i(x))p_i(x)/2}\,dx\\ & \quad\leq C\bigg| \dfrac{|\partial_{x_i}u_n-\partial_{x_i}u|^{p_i(x)}}{(|\partial_{x_i}u_n|+|\partial_{x_i}u|)^{(2-p_i(x))p_i(x)/2}}\bigg|_{2/p_i(x), \Omega_{2, i}}.\bigg| \left(|\partial_{x_i}u_n|+|\partial_{x_i}u|\right)^{(2-p_i(x))p_i(x)/2}\bigg|_{2/(2-p_i(x)), \Omega_{2, i}}.
\end{split}
\end{equation}
Observe that by Lemma 2.3 and letting $\alpha_i(x)= (2-p_i(x))p_i(x)/2$,  we obtain
\begin{equation}
\begin{split}
&\bigg| \left(|\partial_{x_i}u_n|+|\partial_{x_i}u|\right)^{(2-p_i(x))p_i(x)/2}\bigg|_{2/(2-p_i(x)), \Omega_{2, i}} \\ & \qquad\leq  \max\left\lbrace  (|\partial_{x_i}u_n|+|\partial_{x_i}u|_{p_i(x), \Omega_{2, i}})^{\alpha_i^+},(|\partial_{x_i}u_n|+|\partial_{x_i}u|_{p_i(x), \Omega_{2, i}})^{\alpha_i^-} \right\rbrace \\ & \qquad \leq \max\left\lbrace  (|u_n|_{1, \pp(x), \Omega_{2, i}}+|u|_{1, \pp(x), \Omega_{2, i}})^{\alpha_i^+},(|u_n|_{1, \pp(x), \Omega_{2, i}}+|u|_{1, \pp(x), \Omega_{2, i}})^{\alpha_i^-} \right\rbrace.
\end{split}
\end{equation}For further reference, we set
\begin{equation}\label{new constant}
D_i(u_n, u):=\max\left\lbrace  (|u_n|_{1, \pp(x)}+|u|_{1, \pp(x)})^{\alpha_i^+},(|u_n|_{1, \pp(x)}+|u|_{1, \pp(x)})^{\alpha_i^-} \right\rbrace.
\end{equation}Since $u_n \rightharpoonup u$, we get that $D_i(u_n, u)\leq C$ for all $n$ and some  constant $C> 0$.

Now, by Proposition \ref{properties modulo} and inequality \eqref{ineq singular}, we get
\begin{equation}\label{ineqq Ff}
\begin{split}
&\bigg\| \dfrac{|\partial_{x_i}u_n-\partial_{x_i}u|^{p_i(x)}}{(|\partial_{x_i}u_n|+|\partial_{x_i}u|)^{(2-p_i(x))p_i(x)/2}}\bigg\|_{2/p_i(x), \Omega_{2, i}} \\ & \quad\leq \max\left\lbrace  \left( \int_{\Omega_{2, i}}\dfrac{|\partial_{x_i}u_n-\partial_{x_i}u|^{2}}{(|\partial_{x_i}u_n|+|\partial_{x_i}u|)^{2-p_i(x)}}\,dx\right)^{p_i^+/2},\left( \int_{\Omega_{2, i}}\dfrac{|\partial_{x_i}u_n-\partial_{x_i}u|^{2}}{(|\partial_{x_i}u_n|+|\partial_{x_i}u|)^{2-p_i(x)}}\,dx\right)^{p_i^-/2}  \right\rbrace\\ & \quad\leq C \max\bigg\{  \left( \sum_{k=1}^n\int_{\Omega_{2, k}}\dfrac{|\partial_{x_k}u_n-\partial_{x_k}u|^{2}}{(|\partial_{x_k}u_n|+|\partial_{x_k}u|)^{2-p_k(x)}}\,dx + \int_{\Omega_{2, M}}\dfrac{|u_n-u|^{2}}{(|u_n|+|u|)^{2-p_M(x)}}\,dx \right)^{p_i^+/2},\\& \quad \left( \sum_{k=1}^n\int_{\Omega_{2, k}}\dfrac{|\partial_{x_k}u_n-\partial_{x_k}u|^{2}}{(|\partial_{x_k}u_n|+|\partial_{x_k}u|)^{2-p_k(x)}}\,dx + \int_{\Omega_{2, M}}\dfrac{|u_n-u|^{2}}{(|u_n|+|u|)^{2-p_M(x)}}\,dx \right)^{p_i^-/2}\bigg\}\\& \quad \leq C  \max\bigg\{\bigg(\sum_{k=1}^n\int_{\Omega_{2, k}} \left(|\partial_{x_k}u_n|^{p_k(x)-2}\partial_{x_k}u_n-|\partial_{x_k}u|^{p_k(x)-2}\partial_{x_k}u\right)\left(\partial_{x_k}u_n-\partial_{x_k}u\right)\,dx \\ & \quad + \int_{\Omega_{2, M}}\left(|u_n|^{p_M(x)-2}u_n-|u|^{p_M(x)-2}u\right)(u_n-u)\,dx\bigg)^{p_i^+/2},\\& \quad \bigg(\sum_{k=1}^n\int_{\Omega_{2, k}} \left(|\partial_{x_k}u_n|^{p_k(x)-2}\partial_{x_k}u_n-|\partial_{x_k}u|^{p_k(x)-2}\partial_{x_k}u\right)\left(\partial_{x_k}u_n-\partial_{x_k}u\right)\,dx \\ & \quad + \int_{\Omega_{2, M}}\left(|u_n|^{p_M(x)-2}u_n-|u|^{p_M(x)-2}u\right)(u_n-u)\,dx\bigg)^{p_i^-/2}\bigg\} \\ & \quad \leq C \max\bigg\{\left\langle Fu_n-Fu, u_n-u\right\rangle^{p_i^+/2},  \left\langle Fu_n-Fu, u_n-u\right\rangle^{p_i^-/2}\bigg\} ,
\end{split}
\end{equation}where we have also used \eqref{ineq Ff4} and \eqref{ineq Ff5} to recover the full expression of $\left\langle Fu_n-Fu, u_n-u\right\rangle$. Observe that the above estimates show that
$$ \left\langle Fu_n-Fu, u_n-u\right\rangle \geq 0, \text{ for all }n,$$which together with \eqref{asump SS} imply that
$$\left\langle Fu_n-Fu, u_n-u\right\rangle \to 0$$as $n\to \infty$. A similar reasoning is applied if $\Omega_{2, M}\neq \emptyset$. Observe that this yields that all the integrals:
$$\int_{\Omega_{1, i}}|\partial_{x_i}u_n-\partial_{x_i}u|^{p_i(x)}\,dx, \int_{\Omega_{1, M}}|u_n-u|^{p_M(x)}\,dx, \int_{\Omega_{2, i}}|\partial_{x_i}u_n-\partial_{x_i}u|^{p_i(x)}\,dx, \int_{\Omega_{2, M}}|u_n-u|^{p_M(x)}\,dx$$converges to $0$ as $n\to \infty$. Hence, $u_n\to u$ in $W_0^{1, \pp(x)}(\Omega)$. This proves that $F$ is an $(S_+)$-operator. 
We proceed further to show that $F$ is strictly monotone.
Let
$$I_d= \left\lbrace i: \Omega_{1, i}\neq \emptyset \right\rbrace$$
and
$$I_s= \left\lbrace i: \Omega_{2, i}\neq \emptyset \right\rbrace.$$We assume for simplicity that $\Omega_{k, M}\neq \emptyset$ for $k=1, 2.$
 Combining  \eqref{ineq Ff4}, \eqref{ineq Ff5} and \eqref{ineqq Ff}, we derive for $u, v \in W_0^{1, \pp(x)}(\Omega)$  with $u \neq v$ in $\Omega$ that 
\begin{equation}
\begin{split}
&\left\langle Fv-Fu, v-u\right\rangle  \geq  C\bigg(\sum_{i\in I_d}^n\int_{\Omega_{1, i}} |\partial_{x_i}(v-u)|^{p_i(x)}\,dx +\int_{\Omega_{1, M}} |v-u|^{p_M(x)}\,dx \\ & \quad+\sum_{i\in I_s} \min\bigg\{\left(\dfrac{1}{D_i(v, u)} \int_{\Omega_{2, i}}|\partial_{x_i}v-\partial_{x_i}u|^{p_i(x)}\,dx\right)^{2/p_i^-}, \left(\dfrac{1}{D_i(v, u)}\int_{\Omega_{2, i}}|\partial_{x_i}v-\partial_{x_i}u|^{p_i(x)}\,dx\right)^{2/p_i^+}\bigg\}\\& \quad + \min\bigg\{\dfrac{1}{D_M(v, u)}\left(\int_{\Omega_{2, M}} |v-u|^{p_M(x)}\,dx \right)^{2/p_M^-}, \left(\dfrac{1}{D_M(v, u)}\int_{\Omega_{2, M}} |v-u|^{p_M(x)}\,dx \right)^{2/p_M^+}\bigg\}\bigg)>0,
\end{split}
\end{equation}where the constants $D_i(\cdot, \cdot)$ is defined in \eqref{new constant} and where $D_M(\cdot, \cdot)$ is the analogous constant for the exponent $p_M$. Observe that none of these constant is zero. Hence, $F$ is strictly monotone. 


Regarding coerciveness, let $(u_n)_{n\in \mathbb{N}}$ be  a sequence in $W_0^{1, \pp(x)}(\Omega)$ such that $|u_n|_{1, \pp(x)}\to \infty$. Without loss of generality, assume that
$$|u_n|_{p_M(x)}, |\partial_{x_1}u_n|_{p_1(x)} \to \infty \quad \text{ and }\quad |\partial_{x_i}u_n|_{1, \pp(x)}\leq C$$for $i=2, ..., n$. If other terms go to infinite and other remain bounded, the reasoning is exactly the same.  Then
\begin{equation}\label{E11}
\begin{split}
|u_n|_{1, \pp(x)} & \leq \max\left\lbrace  \rho_{p_M}(u_n)^{1/p_M^+}, \rho_{p_M}(u_n)^{1/p_M^-}\right\rbrace + \sum_{i=1}^n\max\left\lbrace  \rho_{p_i}(\partial_{x_i} u_n)^{1/p_i^+}, \rho_{p_i}(\partial_{x_i} u_n)^{1/p_i^-}\right\rbrace\\ & \leq C + \left(\rho_{p_M}(u_n)^{1/p_M^-} + \rho_{p_1}(\partial_{x_1}u_n)^{1/p_1^-}\right)\\ &  \leq C+\left(\rho_{p_M}(u_n) + \rho_{p_1}(\partial_{x_1}u_n)\right)^{1/p_m},
\end{split}
\end{equation}where
$$p_m = \min\left\lbrace p_M^{-}, p_1^{-}, ..., p_n^{-}\right\rbrace > 1.$$Hence,
$$\dfrac{\left\langle Fu_n, u_n \right\rangle}{|u_n|_{1, \pp(x)}}\geq \dfrac{\rho_{p_M}(u_n) + \rho_{p_1}(\partial_{x_1} u_n)}{C+\left(\rho_{p_M}(u_n) + \rho_{p_1}(\partial_{x_1}u_n)\right)^{1/p_m}} \to \infty,$$as $n\to \infty$. 
\end{proof}

\begin{proof}[Proof of Theorem \ref{existence}]
Recalling the operators $F$ and $S$ from Lemma \ref{lemma f} and Lemma \ref{lemma S}, we have that solving problem \eqref{main problems} is equivalent to solve
$$Fu = -Su, \quad u \in  W_0^{1, \pp(x)}(\Omega).$$Since $F$ is coercive,  hemicontinuous  and strictly monotone, we have by \cite[Theorem 26.A]{Z} that the inverse operator $T=F^{-1}$ exists, it is strictly monotone, demicontinuous and bounded. However, since $F$ may be not uniformly monotone, we cannot infer directly from \cite[Theorem 26.A]{Z} that $T$ is continuous. So, we proceed to prove it. Take a sequence $u_n \in W_0^{-1, \pp'(x)}(\Omega)$ such that $u_n \to u$ to some $u\in  W_0^{-1, \pp'(x)}(\Omega)$. Since $T$ is demicontinuous,
$$T(u_n) \rightharpoonup T(u).$$Moreover, observe that
$$\limsup_{n\to \infty}\left\langle F(T(u_n))-F(T(u)), T(u_n)-T(u) \right\rangle = \limsup_{n\to \infty}\left\langle F(T(u_n)), T(u_n)-T(u) \right\rangle+ o_n(1),$$and also
\begin{equation*}
\begin{split}
\left\langle F(T(u_n))-F(T(u)), T(u_n)-T(u) \right\rangle & \leq |u_n-u|_{-1, \pp'(x)}| T(u_n)-T(u)|_{1, \pp(x)}\\ & \leq C|u_n-u|_{-1, \pp'(x)} = o_n(1).
\end{split}
\end{equation*}Thus, since $F$ is a $(S_{+})$-operator, we conclude that $T(u_n)\to T(u)$ strongly in $W_0^{1, \pp(x)}(\Omega)$. This proves that $T$ is continuous. Moreover, by \cite[Lemma 5.2]{Ber}, $T$ is an $(S_+)$-operator. 

Recall that the operator $S$ is bounded, continuous and quasimonotone (since it is compact). Also, to solve problem \eqref{main problems} is equivalent to solve the following Hammerstein equation
\begin{equation}\label{H eq}
v+ S\circ T v=0, \quad v= F(u).
\end{equation}In order to solve \eqref{H eq} we will apply the Degree Theory described in Section \ref{degree theory}. We will prove first that the set
$$B=\left\lbrace v \in W^{-1, \pp'(x)}(\Omega): v+tS\circ T (v)=0, \text{ for some }t \in [0, 1] \right\rbrace$$is bounded. Let $v\in B$, then $ v+tS\circ T (v)=0$ for some $t \in [0, 1]$. Letting $u= T(v)$, we  compute
\begin{equation}\label{est boundd B}
\begin{split}
 \rho_{p_M}(u)+\sum_{i=1}^n\rho_{p_i}(\partial_{x_i} u)& =\left\langle Fu, u \right\rangle  \\ & = \left\langle v, Tv \right\rangle  \\ & = -t\left\langle S\circ T(v), Tv \right\rangle  \leq \int_{\Omega}|f(x, u, D u)||u|\,dx.
\end{split}
\end{equation}Assume, to get a contradiction that the set $T B$ is not bounded. Then, there is a sequence $v_n\in B$ such that
$$|u_n|_{1, \pp(x)}\to \infty, \quad u_n=Tv_n.$$Assume first that 
$$|u_n|_{p_M}\to \infty$$
and $$|\partial_{x_i}u_n|_{1, p_i(x)} \to \infty,$$for all $i$. This implies, in particular, by Proposition \ref{properties modulo} that
\begin{equation}
\rho_{p_M}(u_n), \rho_{p_i}(\partial_{x_i}u_n)\to \infty.
\end{equation}

Having in mind \eqref{est boundd B} and the growth assumptions (f2), we get the following estimates:

\begin{equation}\label{E1}
\begin{split}
\int_\Omega |k||u_n|\,dx & \leq C|k|_{q'(x)}|u_n|_{q(x)}\\& \leq C|k|_{q'(x)}|u_n|_{1, \pp(x)} \\ & \leq C\left(\max\left\lbrace  \rho_{p_M}(u_n)^{1/p_M^+}, \rho_{p_M}(u_n)^{1/p_M^-}\right\rbrace + \sum_{i=1}^n\max\left\lbrace  \rho_{p_i}(u_n)^{1/p_i^+}, \rho_{p_i}(u_n)^{1/p_i^-}\right\rbrace\right)\\ & \leq C\left(\rho_{p_M}(u_n)^{1/p_M^-} + \sum_{i=1}^n \rho_{p_i}(u_n)^{1/p_i^-}\right)\\ & \leq C\left(\rho_{p_M}(u_n)^{1/p_m} + \sum_{i=1}^n \rho_{p_i}(u_n)^{1/p_m}\right) \\ & \leq C\left(\rho_{p_M}(u_n) + \sum_{i=1}^n \rho_{p_i}(u_n)\right)^{1/p_m}.
\end{split}
\end{equation}
Next, we deal with the term in the growth bound (f2) depending on $u_n$:
\begin{equation}
\begin{split}
\int_{\Omega}|u_n|^{q(x)}\,dx &\leq \max\left\lbrace |u_n|_{q(x)}^{q^{-}}, |u_n|_{q(x)}^{q^{+}}\right\rbrace \\&\leq  C\max\left\lbrace |u_n|_{1, \pp(x)}^{q^{-}}, |u_n|_{1, \pp(x)}^{q^{+}}\right\rbrace\\ & =  C|u_n|_{1, \pp(x)}^{q^{+}}. 
\end{split}
\end{equation}Analogously to \eqref{E1}, we derive
\begin{equation}\label{E2}
\int_{\Omega}|u_n|^{q(x)}\,dx \leq C\left(\rho_{p_M}(u_n) + \sum_{i=1}^n \rho_{p_i}(u_n)\right)^{q^{+}/p_m}
\end{equation}Recall that $q^{+}<p_m$. Finally, we consider the next term with derivatives
\begin{equation}\label{E3}
\begin{split}
\int_{\Omega}|\partial_{x_i}u_n|^{r_i(x)}|u_n|\,dx & \leq C||\partial_{x_i}u_n|^{r_i(x)}|_{q'(x)}|u_n|_{q(x)} \\ & = C||\partial_{x_i}u_n|^{r_i(x)}|_{q'(x)r_i(x)/r_i(x)}|u_n|_{q(x)}\\& \leq C\max\left\lbrace |\partial_{x_i}u_n|^{r_i^{+}}_{q'(x)r_i(x)},|\partial_{x_i}u_n|^{r_i^{-}}_{q'(x)r_i(x)} \right\rbrace |u_n|_{1, \pp(x)} \\ &\leq C|\partial_{x_i}u_n|_{p_i(x)}^{r_i^+}|u_n|_{1, \pp(x)}\\ & \leq C \rho_{p_i}(\partial_{x_i}u_n)^{r_i^+/p_i^-}|u_n|_{1, \pp(x)}.
\end{split}
\end{equation}Again, by the calculation from \eqref{E1}, we obtain
\begin{equation}\label{E4}
\int_{\Omega}|\partial_{x_i}u_n|^{r_i(x)}|u_n|\,dx \leq C\left(\rho_{p_M}(u_n) + \sum_{j=1}^n \rho_{p_j}(\partial_{x_j}u_n)\right)^{r_i^{+}/p_i^-+1/p_m}
\end{equation}Recall assumption (f2) that implies$$\dfrac{r_i^+}{p_i^-} +\dfrac{1}{p_m} < 1.$$
 Combining \eqref{E1}, \eqref{E2}, and \eqref{E4} with the growth assumption (f2) and \eqref{est boundd B}, we obtain a contradiction.  If not all the terms in the norms $|u_n|_{1, \pp(x)}$ go to infinite, then similar bounds to \eqref{E1}, \eqref{E2}, and \eqref{E4} hold, except that a constant should be added. However, the contradiction is still obtained.   Hence, $TB$ is a bounded set. Consequently, by the definition of $B$ and since $S$ is bounded, we obtain that $B$ itself is bounded.

We conclude with a usual argument in the application of Degree Theory. Choose $R> 0$ such that $|v|< R$ for all $v\in B$. Hence, for $v \in \partial B_R(0)$, we get
$$v+tS\circ T v \neq 0, \quad \text{for all }t\in [0, 1].$$From Lemma \ref{lemma KH}, 
$$I+S\circ T\in \mathcal{F}_T(\overline{B_R(0)}) \quad \text{and }\quad I= F\circ T\in \mathcal{F}_T(\overline{B_R(0)}).$$Consider next the homotopy $H:[0,1]\times \overline{B_R(0)}\to X$ given by
$$H(t, v):= v + t S\circ T v,\quad (t, v)\in  [0,1]\times \overline{B_R(0)}.$$
Applying the homotopy invariance and normalization properties of the degree (see Theorem \ref{main theorem degree}), we obtain that
$$d(I+S\circ T, B_R(0), 0)= d(I, B_0(0), 0)= 1.$$Hence, by the existence property in Theorem \ref{main theorem degree}, there is some $v\in B_R(0)$ such that
$$v+S\circ T v =0.$$Thus, $u=Tv$ is a solution of \eqref{main problems}. As we want to prove.

\end{proof}
\section*{Acknowledgments}
This work was partially supported by CONICET PIP 11220210100238CO and
ANPCyT PICT 2019-03837. P. Ochoa has been partially supported by Grant B017-UNCUYO.  P. Ochoa and A. Silva are members of CONICET.

\end{document}